\newtheorem{theorem}{Theorem}[section]
\newtheorem{lemma}[theorem]{Lemma}
\newtheorem{proposition}[theorem]{Proposition}
\newtheorem{corollary}[theorem]{Corollary}
\newtheorem{definition}[theorem]{Definition}
\newtheorem{rmrk}[theorem]{Remark}
\DeclareMathAlphabet{\mathbfit}{OML}{cmm}{b}{it}
\newenvironment{remark}
{\begin{rmrk} \em}
{\end{rmrk}}
\newcommand{\fn} {function}
\newcommand{\tr} {trajector}
\newcommand{\sy} {system}
\newcommand{\R} {\mathbb{R}}
\newcommand{\C} {\mathbb{C}}
\newcommand{\Z} {\mathbb{Z}}
\newcommand{\N} {\mathbb{N}}
\newcommand{\article}[3] {\textsc{{#1}}, {\itshape {#2}}, {{#3}}.}
\newcommand{\book}[3] {\textsc{{#1}}, {\itshape {#2}}, {{#3}}.}
\newcommand{\vol} {\textbf}
\newcommand{\eps} {\varepsilon}
\newcommand{\into} {\longrightarrow}
\renewcommand{\emptyset} {\varnothing}
\newcommand{\pr} {probability}
\newcommand{\ra} {random}
\newcommand{\rw} {random walk}
\newcommand{\en} {environment}
\newcommand{\sgn} {\mathrm{sgn}}
\newcommand{\Oen} {\Omega_\mathrm{en}}   
 \newcommand{\be}{\begin{equation}}
 \newcommand{\ee}{\end{equation}}
 \newcommand{\ba}{\begin{array}}
 \newcommand{\ea}{\end{array}}
 \newcommand{\bea}{\begin{eqnarray}}
 \newcommand{\eea}{\end{eqnarray}}
 \newcommand{\bl}{\begin{lemma}}
 \newcommand{\el}{\end{lemma}}
 \newcommand{\br}{\begin{remark}}
 \newcommand{\er}{\end{remark}}
 \newcommand{\bt}{\begin{theorem}}
 \newcommand{\et}{\end{theorem}}
 \newcommand{\bd}{\begin{definition}}
 \newcommand{\ed}{\end{definition}}
 \newcommand{\bcl}{\begin{claim}}
 \newcommand{\ecl}{\end{claim}}
 \newcommand{\bp}{\begin{proposition}}
 \newcommand{\ep}{\end{proposition}}
 \newcommand{\bc}{\begin{corollary}}
 \newcommand{\ec}{\end{corollary}}
 \newcommand{\bpr}{\begin{proof}}
 \newcommand{\epr}{\end{proof}}
 \newcommand{\bi}{\begin{itemize}}
 \newcommand{\ei}{\end{itemize}}
 \newcommand{\ben}{\begin{enumerate}}
 \newcommand{\een}{\end{enumerate}}
 \def \D {{\mathbb D}}
 \def \P {{\mathbb P}}
 \def \E {{\mathbb E}}
 \def \cN {\mathcal{N}}
 \def \cT {\mathcal{T}}
 \def \cV {\mathcal{V}}
 \def \a {{\alpha}}
 \def \b {{\beta}}
 \def \e {{\varepsilon}}
  \def \g {{\gamma}}
 \def \s {{\sigma}}
 \def \z {{\z}}
 \def \t {{\tau}}
 \def \o {{\omega}}
 \def \x {{\xi}}
 \def \z {{\zeta}}
\def \à {{\`{a}}}
\def \ì{{\`{\i}}}
\def \ò{{\`{o}}}
\def \è{{\`{e}}}
\def \ù{{\`{u}}}
 \def \1{\mathbbm{1}} 
\begin{document}

\title[Random walk between L\'evy-spaced targets]{Continuous-time
random walk between L\'evy-spaced targets in the real line}

\author{Alessandra Bianchi}
\address{Dipartimento di Matematica,
Universit\`a di Padova, Via Trieste 63,
35121 Padova, Italy.}\email{bianchi@math.unipd.it}
\author{Marco Lenci}
\address{Dipartimento di Matematica, Universit\`a di Bologna,
Piazza di Porta San Donato 5, 40126 Bologna, Italy\\
and Istituto Nazionale di Fisica Nucleare,
Sezione di Bologna, Via Irnerio 46,
40126 Bologna, Italy.}
\email{marco.lenci@unibo.it}
\author{Fran\c coise P\`ene}
\address{Institut Universitaire de France and Universit\'e de Brest,
UMR CNRS 6205, Laboratoire de Math\'ematique de Bretagne
Atlantique, 6 avenue Le Gorgeu, 29238 Brest cedex, France.}
\email{francoise.pene@univ-brest.fr}.

\date{Mar 26, 2019}

\begin{abstract}
  We consider a continuous-time random walk which is defined as
  an interpolation of a random walk on a point process on the real
  line. The distances between neighboring points of the point process
  are i.i.d.\ random variables in the normal domain of attraction of an
  $\alpha$-stable distribution with $0 < \alpha < 1$. This is therefore
  an example of a \emph{random walk in a L\'evy random medium}.
  Specifically, it is a generalization of a process known in the physical
  literature as \emph{L\'evy-Lorentz gas}. We prove that the annealed
  version of the process is superdiffusive with scaling exponent
  $1/(\a+1)$ and identify the limiting process, which is not c\`adl\`ag.
  The proofs are based on the technique of Kesten and Spitzer for
  random walks in random scenery.

  \bigskip\noindent
  Mathematics Subject Classification (2010): 60G50, 60F05 (82C41,
  60G55, 60F17).
  
  \bigskip\noindent
  Keywords: L\'{e}vy-Lorentz gas,  random walk on point process, 
  anomalous diffusion, L\'evy random medium, stable processes, 
  random walk in random scenery.
\end{abstract}

\maketitle

\section{Introduction}
\label{intro}

In this paper we study a continuous-time \rw\ in a \ra\ medium on the
real line. The \ra\ medium is given by a marked point process in which
the distances between consecutive points are i.i.d.\ variables taken in
the basin of attraction of an $\a$-stable distribution, with $0 < \a < 1$.
The marked points are referred to as \emph{targets}. A particle travels
in $\R$ with unit speed visiting the sequence of targets selected by an
independent \rw\ $(S_n, \, n\in \N)$. This \rw\ is called the
\emph{underlying \rw}.

So, for example, if the realization of $(S_n)$ is $(0, 2, -3, -1, \ldots)$,
the particle will travel with velocity $1$ from the origin $O$ to the
second target to the right of $O$. Then it will instantaneously change
its velocity to $-1$ to travel toward the third target to the left of $O$.
Then it will travel with velocity $1$ toward the first target to the left
of $O$, and so on.

Since the distances between targets are fat-tailed variables (their
first moment is infinite), the \tr ies of the process occasionally
experience extremely long inertial segments, leading one to believe
that the process is superdiffusive. By that we mean that it scales like
a power of time with exponent bigger than $1/2$. The purpose of the
paper is to show this in a very specific sense.

More precisely, if $X(t) \equiv X^\o(t)$ denotes our process, with the
label $\o$ representing the \ra\ medium, we prove that the
finite-dimensional distributions of $(n^{-1/(\a+1)} X(nt), \, t \ge 0)$,
w.r.t.\ both the \ra\ medium and the \ra\ dynamics, converges to those of a certain
process which we identify (Theorem \ref{thm1}). In other words, we
prove an annealed generalized CLT for the finite-dimensional
distributions of $(X(t), \, t \ge 0)$.

Processes of this kind have wide application in the physical sciences,
where they are used as models for \emph{anomalous diffusion}; see
\cite{szu, krs, zdk, cgls, vbb} and references therein.
Indeed, in many real-world applications (in statistical physics, optics,
epidemics, etc.), the long inertial segments, or \emph{ballistic flights},
that cause superdiffusion are not due to an anomalous mechanism
that governs the dynamics of the agent (a particle, a photon, an animal,
etc.)\ but rather to the complexity of the medium in which the motion
occurs. An example is molecular diffusion in porous media \cite{le} (see
the reference lists of \cite{le, bfk, bcv} for more examples).
As a matter of fact, the model presented here is a generalization
of the so-called \emph{L\'evy-Lorentz gas}, which was introduced
in \cite{bfk} precisely as a one-dimensional toy model for transport
in porous media. (The L\'evy-Lorentz gas is the case
where the underlying \rw\ is a simple symmetric \rw.)

In the language of \pr\ it makes sense to call the process at hand
a continuous-time \emph{\rw\ in a L\'evy \ra\ medium}, meaning that
it is the medium that causes the long ballistic flights. We occasionally
also use the expression `\rw\ in a L\'evy \ra\ \en' to lay emphasis on the
analogies between our process and the \rw s in \ra\ \en\ ---
even though the two types of processes are technically different.

A rigorous study of our \sy\ was initiated in \cite{bcll} where the authors
considered the easier case where the distances between the targets
have finite mean and infinite variance. This includes all random variables
with distribution in the basin of attraction of an $\a$-stable law with
$1 < \a < 2$. For this regime, subject to certain hypotheses, they
prove a quenched normal CLT, that is, for a.e.\ realization $\o$ of the
medium, $t^{-1/2} X^\o(t)$ converges, as $t \to \infty$, to a Gaussian
variable independent of $\o$. This implies in particular the annealed
normal CLT,  confirming and adding on some of the predictions of
\cite{bcv} for the L\'evy-Lorentz gas.

Our main result in this paper confirms and extends some other
predictions of \cite{bcv} and for the first time, to our knowledge,
prove superdiffusion for some processes in L\'evy \ra\ media.
The scaling we obtain is also in agreement with the one that has
been identified recently for a related model \cite{acor}. Our
proofs are based on an adaptation of the technique of Kesten and
Spitzer for \emph{random walks in random scenery} \cite{ks}.
This explains why our limit processes involve the so-called
\emph{Kesten-Spitzer process} $\Delta$.

The paper is organized as follows: In Section \ref{sec-setup} we give
the precise definitions of all the processes associated with our random
walk, and present our most important results (Theorems \ref{thm1} and
\ref{thm2}). In Section \ref{sec-flow} we prove these results by means
of what we call our Main Lemma (Lemma \ref{main-lemma}). The Main
Lemma is then proved in Section \ref{sec-main}, using ideas from the
theory of random walks in random scenery.

\bigskip\noindent
\textbf{Acknowledgments.}\ F.~P\`ene is supported by the IUF and by
the ANR Project MALIN (ANR-06-TCOM-0003). M.~Lenci's research is
part of his activity within the Gruppo Nazionale di Fisica Matematica (INdAM).
We also wish to thank the American Institute of Mathematics for the
Workshop \emph{``Stochastic Methods for Non-Equilibrium Dynamical
Systems"}, where this work was initiated.

\section{Setup}
\label{sec-setup}

Let $\z := (\z_j, \, j \in \Z )$ be a sequence of i.i.d.\ positive \ra\ variables,
the common distribution of which belongs to the normal basin of
attraction of an $\a$-stable distribution, with $0 < \a < 1$. This means
that, as $n \to \infty$, $n^{-1/\a} \sum_{j=1}^n \z_j$ converges in
distribution to a variable $Z_1$, which must then be positive and
non-integrable.  The recursive sequence of definitions
\be \label{PP}
  \o_0 := 0 \, , \qquad \o_k - \o_{k-1} := \z_k \,, \quad \mbox{for } k
  \in \Z \,.
\ee
determines a marked \emph{point process} $\o := ( \o_k, \, k\in\Z)$
on $\R$, which we call the \emph{\ra\ medium} or \emph{\ra\ \en}, or
simply the \en. Each point $\o_k$ will be called a \emph{target}. The
set of all possible \en s is denoted $\Oen$, and the law on it $P$.

Let $S := (S_n,\, n \in \N)$ be a $\Z$-valued random walk starting
from $S_0 := 0$ with centered i.i.d.\ increments
$\xi_j := S_j - S_{j-1}$, for $j \in \Z^+$. We assume that there exists
$\g > 2/\a$ such that the absolute moment of $\xi_j$ of order $\g$ is
finite
(this assumption is used in Lemma \ref{lemma-Nn}).
This implies that the variance of $\xi_j$, which we denote
$v_\xi$, is also finite. In order for the problem to make sense, we also
assume $v_\xi>0$. We refer to $S$ as the \emph{underlying
random walk} and call $Q$ its distribution (on $\Z^\N$
endowed with the $\sigma$-algebra generated by cylinders).

Given $\o \in \Oen$, we define the so-called \emph{\rw\ on the point
process} to be, for all $n \in \N$,
\be\label{Y}
  Y_n \equiv Y_n^\o := \o_{S_n}\,.
\ee
In simple terms, $Y := (Y_n,\, n \in \N)$ performs the same jumps
as $S$, but on the points of $\o$.

The process we are interested in here is the continuous-time \rw\
$X \equiv X^\o := ( X^\o (t), \, t \ge 0 )$ whose trajectories interpolate
those of the walk $Y$ and have unit speed (save at collision times).
Formally it can be defined as follows: Given a realization $\o$ of the
medium and a realization $S$ of the dynamics, we define the sequence
of \emph{collision times} $T_n \equiv T_n(\o, S)$ via
\be \label{collisiontime}
  T_0 := 0 \,, \qquad T_n := \sum_{k=1}^n
  |\o_{S_k}-\o_{S_{k-1}}| \,, \quad \mbox{for } n \ge 1 \,.
\ee
Since the length of the $n^\mathrm{th}$ jump of the walk is
given by $|\o_{S_n}-\o_{S_{n-1}}|$, $T_n$ represents the global length
of the \tr y up to the $n^\mathrm{th}$ collision.

Finally, $X(t) \equiv X^\o(t)$ is defined by the equations
\be\label{process}
  X(t) := Y_n+ \sgn(\x_{n+1} )(t - T_n) \,, \quad \mbox{for } t \in
  [T_n , T_{n+1}) \,.
\ee
Notice that $\x_{n+1} = 0 \Leftrightarrow S_{n+1} = S_{n} \Leftrightarrow
T_{n+1} = T_n$. Therefore the definition (\ref{process}) is never used
in the case $\x_{n+1} = 0$, making the value of $\sgn(0)$ irrelevant
there. More importantly, the self-jumps of the underlying RW, i.e.,
$S_{n+1} = S_n$, are not seen by the continuous-time process
$X$. This implies that one can remove any lazy component of $S$ by
replacing the common distribution of the $\xi_j$ by the conditional
distribution of $\xi_1$ given $\xi_1 \ne 0$ (this is well defined since
$v_\xi>0$). We thus assume from now on that $Q(\xi_j=0) = 0$.

Also without loss of generality we assume that $\xi_1$, equivalently
$S$, is not supported on a proper subgroup if $\Z$. If it were
supported, say, on $d\Z$, with $d \in \N$, $d \ge 2$, one could divide
$S$ by $d$ and replace $\zeta_j$ with $\sum_{k=(j-1)d+1}^{jd}\zeta_k$.
The resulting continuous-time walk would be a simple rescaling of the
original process $X$.

The law that embraces all the random aspects of the process is the
product probability $\P := P \times Q$ on $\Oen \times \Z^\N$. The
\pr\ $\P$ is sometimes called the \emph{annealed} or
\emph{averaged} law.

Our main result is the finite-dimensional generalized CLT for the
family of processes
\be \label{x-tilde}
  \left( \bar{X}^{(q)} (s), \, s \ge 0 \right) := \left( \frac{X(sq)} {q^{1/(\a+1)}},
  \, s \ge 0 \right) \,,
\ee
where $q \in \R^+$ plays the role of the \emph{scaling parameter}. In
order to state it, we need some preliminary notation. Let $(B(t), \, t \ge 0)$
be a Brownian motion such that $B(t)$ has mean 0 and variance $v_\xi t$,
and denote by $(L_t(x), \, x \in \R)$ its corresponding local time. Let
$(Z_{\pm}(x), \, x \ge 0)$ be two i.i.d.\ c\`adl\`ag $\a$-stable processes
with independent and stationary increments such that $Z_{\pm}(0) \equiv 0$
and $Z_\pm (1)$ is distributed like $Z_1$, cf.\ beginning of the section.
Notice that the processes $Z_{\pm}$ are strictly increasing subordinators.

Assume that the processes $B$, $Z_+$ and $Z_-$ are mutually independent
and consider the process $\Delta := (\Delta(t) , \, t \ge 0 )$ given by
\be \label{Delta}
  \Delta(t) := \mu_\xi \left( \int_0^{+\infty} \!\! L_t(x) \, dZ_+(x) + \int_0^{+\infty}
  \!\! L_t(-x) \, dZ_-(x) \right) ,
\ee
with $\mu_\xi:=\mathbb E(|\xi_1|)$. The above is well-defined because
$x \mapsto L_t(x)$ is almost surely continuous and compactly supported.
Also, let us define $(Z(x), \, x \in \R)$ to be
\be
  Z(x) := \left\{
    \begin{array}{cl}
      Z_+(x), & x \ge 0 \,; \\
      -Z_-(-x), & x < 0 \,.
    \end{array}
  \right.
\ee
Observe moreover that $\Delta$ is a.s.\ continuous and strictly increasing.

\begin{theorem} \label{thm1}
  Under the law $\P$, the finite dimensional distributions of
  $( \bar{X}^{(q)} (s), \, s \ge 0 )$ converge, as $q \to \infty$, to
  those of $\left( Z \circ B \circ \Delta^{-1}(s), \, s \ge 0 \right)$.
\end{theorem}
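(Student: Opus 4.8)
The plan is to follow the Kesten–Spitzer strategy for random walks in random scenery, adapted to the present continuous-time setting. The key observation is that the position $X(t)$ and the clock $T_n$ can both be encoded in terms of the underlying walk $S$ together with two independent ``scenery'' fields indexed by the sites of $\Z$: namely the signed displacements $\o_j - \o_{j-1} = \z_j$ (which govern $X$) and the absolute displacements $|\o_{S_k} - \o_{S_{k-1}}|$ (which govern $T$). Concretely, writing $N_n(y)$ for the number of visits of $S$ to site $y$ up to time $n$, the collision time $T_n$ is a sum over sites of $N_n(y)$ times a spacing-type quantity, and $Y_n = \o_{S_n}$ is a similar ``random scenery'' sum of the $\z_j$. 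The standard Kesten–Spitzer scaling gives that, jointly, $n^{-1/\a}$ times the scenery sum for $T_n$ and $n^{-1/2} S_{n}$ (together with the rescaled local-time field $n^{-1/2} N_n(\lfloor \cdot\, n^{1/2}\rfloor)$) converge to $(\Delta, B, L)$; here the exponent $1/\a$ for $T_n$ comes from combining the $n^{1/2}$ sites visited with the $\a$-stable tails of the spacings, and $\mu_\xi$ appears because on the bulk of visited sites the empirical mean of $|\xi|$ stabilizes. This is exactly the content I would expect the Main Lemma (Lemma~\ref{main-lemma}) to provide, so I would cite it here.

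Granting such a joint invariance principle, the proof of Theorem~\ref{thm1} is an inversion argument. Fix times $0 \le s_1 < \dots < s_m$. By definition $X(sq) = Y_{n(sq)} + O(1)$ where $n(sq)$ is the unique $n$ with $T_n \le sq < T_{n+1}$; that is, $n(sq) = T^{-1}(sq)$ up to lower-order corrections, and the additive error $\sgn(\x_{n+1})(t-T_n)$ is at most a single spacing, which after dividing by $q^{1/(\a+1)}$ is negligible in probability (a spacing is $O_P(1)$, while $q^{1/(\a+1)} \to \infty$). Now set $q = T_n$ asymptotically: if $T_n \approx c\, n^{1/\a}$ in the appropriate functional sense, then inverting gives $n(sq) \approx (s q)^{\a}$-type behaviour, and one checks the bookkeeping so that the natural number of steps scales as $q^{\a/(\a+1)}$. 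Writing $n_q := q^{\a/(\a+1)}$, we have $Y_{n(sq)} = \o_{S_{n(sq)}}$, and $\o_{S_k}$ is itself a random-scenery sum of the $\z_j$ over sites between $0$ and $S_k$; rescaling space by $n_q^{1/2} = q^{1/(2(\a+1))}$ and noting $q^{1/2}\cdot(1/(\a+1)) $ matches the target exponent $1/(\a+1)$ after accounting for $n(sq)\sim$ const$\cdot q^{\a/(\a+1)}$, one obtains that $q^{-1/(\a+1)} \o_{S_{n(sq)}}$ converges to $Z$ evaluated at the limiting value of $n_q^{-1/2} S_{n(sq)}$. The latter is $B$ evaluated at the limiting (random) time $\Delta^{-1}(s)$, because $n(sq)/n_q \to \Delta^{-1}(s)$ by continuity and strict monotonicity of $\Delta$ (hence of $\Delta^{-1}$), and $B$ is independent of $\Delta$. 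Combining, $\bar X^{(q)}(s) \to Z\circ B \circ \Delta^{-1}(s)$ jointly over $s_1,\dots,s_m$, which is the assertion.

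A few technical points need care. First, the composition $Z\circ B\circ\Delta^{-1}$ must be shown to be a.s.\ continuous as a function of the three input processes at the relevant points, so that convergence in distribution of $(B, Z, \Delta, N_\cdot)$ passes to the composition via the continuous mapping theorem; this uses that $B$ has continuous paths, that $Z$ is a pure-jump subordinator-type process evaluated at a point where it is a.s.\ continuous (since $\Delta^{-1}(s)$ has a density and is independent of $Z$, it a.s.\ avoids the countable jump set of $Z$ — though note $Z$ itself is \emph{not} continuous, which is why the limit process is not c\`adl\`ag), and that $\Delta^{-1}$ is continuous because $\Delta$ is continuous and strictly increasing. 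Second, one must control uniformity: the argument $S_{n(sq)}$ is random, so I would pass through convergence of the pair $\big(n_q^{-1/2} S_{\lfloor n_q \,\cdot\,\rfloor},\ \text{rescaled scenery}\big)$ in a functional sense and use a Skorokhod-representation / continuous-composition argument, exactly as in Kesten–Spitzer, to substitute the random time $n(sq)/n_q$. Third, the error terms — the single-spacing overshoot in $X(t) - Y_{n(t)}$, and the discrepancy between $n(sq)$ and the continuous inverse of the limiting $\Delta$ — must be shown to vanish after rescaling; these are the kind of estimates one expects to be handled (or to reduce to what is handled) in the Main Lemma. The main obstacle, as usual with the Kesten–Spitzer method, is establishing the joint convergence of the local-time field, the underlying walk, and the two scenery sums (for position and for clock) to the correct coupled limit $(L, B, Z, \Delta)$ with $\Delta$ built from the \emph{same} local time $L$; once that is in hand, the inversion and composition steps above are comparatively routine.
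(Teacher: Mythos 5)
Your overall strategy matches the paper: establish a joint invariance principle for $(\bar\o^{(q)},\bar S^{(q)},\bar T^{(q)})\to(Z,B,\Delta)$ (this is the Main Lemma), pass to an a.s.\ coupling via Skorokhod representation, then express $\bar X^{(q)}(s)$ through inversion of $\bar T^{(q)}$ and composition, using continuity of $\Delta^{-1}$ and the fact that $Z$ is a.s.\ continuous at the random point $B(\Delta^{-1}(s))$. All of those are precisely the ingredients in the paper's proof (Lemmas~\ref{LEM0} and \ref{LEMcomp}), and you correctly identify that the coupled limit with $\Delta$ built from the same local time $L$ is the crux.

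However, the scaling bookkeeping in your sketch is wrong in a way that would matter if one tried to run the argument. You claim $T_n\approx n^{1/\a}$; the correct Kesten--Spitzer exponent for a random walk in random scenery is $T_n\approx n^{(\a+1)/2\a}$ (there are $O(\sqrt n)$ distinct visited sites, each with local time $O(\sqrt n)$, and the $\z$'s are $\a$-stable). Inverting the correct relation with $T_n\approx sq$ gives $n(sq)\approx q^{2\a/(\a+1)}$, not your $q^{\a/(\a+1)}$ (nor $q^\a$, which is what $n^{1/\a}$ would yield). Consequently $S_{n(sq)}\approx q^{\a/(\a+1)}$ and $\o_{S_{n(sq)}}\approx q^{1/(\a+1)}$; with your exponents the final scaling of $X$ would not come out to $q^{1/(\a+1)}$. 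The paper sidesteps this bookkeeping by the clean substitution $p:=q^{2\a/(\a+1)}$, so that $q^{1/(\a+1)}=p^{1/2\a}$ and $\sqrt p=q^{\a/(\a+1)}$ fall out of the definitions \eqref{o-bar}--\eqref{t-bar}. Finally, your handling of the interpolation overshoot $X(sq)-Y_{n(sq)}$ is only heuristic; the paper instead observes that $\bar X^{(q)}(s)$ lies \emph{exactly} in the interval with endpoints $\bar\o^{(p)}\circ\bar S^{(p)}((\bar T^{(p)})^{-1}(s)-p^{-1})$ and $\bar\o^{(p)}\circ\bar S^{(p)}\circ(\bar T^{(p)})^{-1}(s)$ and shows via Lemma~\ref{LEMcomp} (applied twice, with $a_k$ differing by $p_k^{-1}$) that both endpoints converge to the same limit $Z(B(\Delta^{-1}(s)))$, which is tighter and avoids any separate tightness estimate for the overshoot.
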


The process $\Delta$, often called the Kesten-Spitzer process, was
introduced in \cite{ks} as the limit, for $n \to \infty$, of the processes
\be
  \left( \frac1 {n^{(1+\a) / 2\a}} \sum_{k=1}^{\lfloor nt \rfloor} \z_{S_k},
  \, t \in \R^+ \right) ,
\ee
where $\lfloor a \rfloor$ denotes the largest integer less than or equal
to $a \in \R$. The sum $\sum_k \z_{S_k}$ is referred to as a
\emph{random walk in the random scenery $\z$}; see also \cite{b1,b2}. In
the present context $\Delta$ appears as the limit of the suitably rescaled
collision time processes, as clarified by the next result, which also gives
the asymptotic behavior of $Y$, the random walk on the point process.
(The recent reference \cite{ms} also investigates limit theorems for $Y$
 in the case $\alpha \in (1,2)$, and with some restrictions. This process is of course simpler than the actual
L\'evy-Lorentz gas.)

\begin{theorem} \label{thm2}
  Under $\P$, the (joint) finite dimensional distributions of
  $$
    \left( \left( \frac{ T_{\lfloor t q \rfloor} } {q^{(1+\a) / 2\a} } \,,\,
    \frac{ Y_{\lfloor t q \rfloor} } {q^{1/2\a}} \right), \, t \ge 0 \right)
  $$
  converge, for $q \to \infty$, to the corresponding distributions of
  $( (\Delta, Z \circ B), t \ge 0)$.
\end{theorem}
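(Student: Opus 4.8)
The plan is to express both coordinates as explicit functionals of the underlying walk $S$ and of the i.i.d.\ gap sequence $\z=(\z_i)$ --- which is independent of $S$ --- to pass to the scaling limit in each ingredient separately, and to assemble the pieces by a conditioning plus continuous-mapping argument in the spirit of Kesten and Spitzer. In fact the natural statement to prove is the joint functional convergence of $\big(q^{-1/2}S_{\lfloor\cdot\,q\rfloor},\,q^{-(1+\a)/2\a}T_{\lfloor\cdot\,q\rfloor},\,q^{-1/2\a}Y_{\lfloor\cdot\,q\rfloor}\big)$ to $\big(B,\Delta,Z\circ B\big)$, which I would isolate as the Main Lemma; Theorem \ref{thm2} then follows at once (and, composed with \eqref{process}, so does Theorem \ref{thm1}). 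The first step is a purely deterministic rewriting: $Y_n=\omega_{S_n}$, while, since the $k$-th jump sweeps exactly the gaps indexed by $I_k:=\{S_{k-1}\wedge S_k+1,\dots,S_{k-1}\vee S_k\}$,
\be \label{pp-T}
  T_n=\sum_{k=1}^n|\omega_{S_k}-\omega_{S_{k-1}}|=\sum_{k=1}^n\sum_{i\in I_k}\z_i=\sum_{i\in\Z}\z_i\,\widetilde N_n(i),
\ee
where $\widetilde N_n(i):=\#\{1\le k\le n:\ i\in I_k\}$ is the number of times the path of $S$ crosses the edge $\{i-1,i\}$, so that $\sum_i\widetilde N_n(i)=\sum_{k\le n}|\xi_k|$. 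Thus $T_n$ is a random walk in the random scenery $\z$, with the occupation numbers $\widetilde N_n(i)$ in the role normally played by the site local times of $S$.

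Next I would collect the three scaling limits, with $n=\lfloor tq\rfloor$. Donsker gives $q^{-1/2}S_{\lfloor\cdot\,q\rfloor}\Longrightarrow B$, with $B$ centred Gaussian of variance $v_\xi t$. The invariance principle for sums in the normal domain of attraction of an $\a$-stable law gives $q^{-1/2\a}\omega_{\lfloor x\sqrt q\rfloor}\Longrightarrow Z(x)$ in the $J_1$ topology on $\R$, with $Z_\pm(1)\sim Z_1$, independently of $S$. The key point is the joint convergence of the rescaled occupation profile, uniformly in the space variable:
\be \label{pp-N}
  \Big(\tfrac1{\sqrt q}\,S_{\lfloor tq\rfloor}\,,\ \tfrac1{\sqrt q}\,\widetilde N_{\lfloor tq\rfloor}(\lfloor x\sqrt q\rfloor)\Big)\ \Longrightarrow\ \big(B(t)\,,\ \mu_\xi L_t(x)\big),
\ee
$L_t$ being the local time of $B$. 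The factor $\mu_\xi$ emerges from $\sum_i\widetilde N_n(i)\,g(i/\sqrt q)=\sum_{k\le n}\sum_{i\in I_k}g(i/\sqrt q)$: because $|I_k|=|\xi_k|=o(\sqrt q)$ uniformly (the hypothesis $\g>2$ makes the largest increment up to time $n$ be $o(\sqrt n)$), the inner sum equals $|\xi_k|\,g(S_{k-1}/\sqrt q)$ up to errors which add up to $o(q)$; a martingale law of large numbers (using that $\xi_k$ is independent of $S_{k-1}$) then replaces $|\xi_k|$ by $\mu_\xi$; and $\frac1q\sum_{k\le\lfloor tq\rfloor}g(S_{k-1}/\sqrt q)\to\int_0^t g(B_s)\,ds=\int_\R g(x)L_t(x)\,dx$ by Donsker. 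Upgrading this weak statement to the uniform convergence in \eqref{pp-N}, and supplying the accompanying maximal control on $\widetilde N_n$, is exactly the purpose of the local-limit estimates of Lemma \ref{lemma-Nn}, and is where the assumption $\g>2/\a$ is used.

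To assemble, I would condition on $S$, under which $\z$ stays i.i.d. Writing $Z^{(q)}(x):=q^{-1/2\a}\omega_{\lfloor x\sqrt q\rfloor}$ (an increasing c\`adl\`ag function, converging in $J_1$ to $Z$, with $Z$ independent of $(B,L)$) and $\widetilde L^{(q)}_t(x):=q^{-1/2}\widetilde N_{\lfloor tq\rfloor}(\lfloor x\sqrt q\rfloor)$, the identity \eqref{pp-T} together with $\omega_i-\omega_{i-1}=\z_i$ and $\sqrt q\cdot q^{-(1+\a)/2\a}=q^{-1/2\a}$ gives, exactly,
\[
  \frac{T_{\lfloor tq\rfloor}}{q^{(1+\a)/2\a}}=\sum_{i\in\Z}\widetilde L^{(q)}_t(i/\sqrt q)\,\big(Z^{(q)}(i/\sqrt q)-Z^{(q)}((i-1)/\sqrt q)\big).
\]
Splitting off the uniform error $\widetilde L^{(q)}_t-\mu_\xi L_t$ (whose sup-norm tends to $0$ by \eqref{pp-N}, while the total variation of the increasing function $Z^{(q)}$ over the relevant compact stays bounded) reduces the right-hand side to the Riemann--Stieltjes sum $\mu_\xi\sum_i L_t(i/\sqrt q)\big(Z^{(q)}(i/\sqrt q)-Z^{(q)}((i-1)/\sqrt q)\big)$, which, $L_t$ being a.s.\ continuous and compactly supported and $Z$ being a.s.\ continuous at the endpoints of $\mathrm{supp}\,L_t$ (those endpoints being functions of $B$, hence independent of $Z$), converges to $\mu_\xi\int_\R L_t\,dZ=\Delta(t)$. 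Likewise $q^{-1/2\a}Y_{\lfloor tq\rfloor}=Z^{(q)}(q^{-1/2}S_{\lfloor tq\rfloor})$ with $q^{-1/2}S_{\lfloor tq\rfloor}\to B(t)$, and $Z$ a.s.\ continuous at the independent value $B(t)$, so continuity of evaluation gives $q^{-1/2\a}Y_{\lfloor tq\rfloor}\Longrightarrow Z\circ B(t)$. All of this is joint over finitely many times $t_1<\dots<t_r$, since the limits are fixed a.s.-continuous functionals of the single triple $(B,Z_+,Z_-)$ while the prelimit data converge jointly; and the conditional-on-$S$ convergences pass to unconditional ones by testing against bounded Lipschitz functions and using that $\z$ is independent of $S$. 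This yields the asserted convergence of the joint finite-dimensional distributions to those of $((\Delta,Z\circ B),t\ge0)$.

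The hard part is \eqref{pp-N}: obtaining the convergence of the rescaled edge-crossing profile to $\mu_\xi L_t$ \emph{uniformly} in the space variable and jointly with $S$, together with enough uniform control on $\widetilde N_n$ (maximal bounds, tightness of the profiles, negligibility of the large increments of $\xi$) to legitimately interchange the scaling limit with the summation against the heavy-tailed scenery $\z$ in \eqref{pp-T}. This is precisely where the moment hypothesis $\g>2/\a$ enters, and it is the substance of the lemmas proved afterwards, notably Lemma \ref{lemma-Nn} and the Main Lemma (Lemma \ref{main-lemma}); by comparison, the stable invariance principle for $\omega$ and the composition and Stieltjes-continuity arguments of the assembly step are routine.
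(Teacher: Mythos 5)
Your overall strategy is close to the paper's in spirit: rewrite $T_n=\sum_y\mathcal N_n(y)\z_y$ as a random walk in random scenery with the edge-crossing counts $\mathcal N_n$ (your $\widetilde N_n$), replace them by $\mu_\xi$ times the site local times, use Donsker for $S$ and the $\a$-stable invariance principle for $\o$, and handle $Y_n=\o_{S_n}$ by a composition/continuity argument. The composition step you describe for $Y$ matches the paper's Lemma~\ref{LEMcomp}. However, two points are genuinely off.

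First, the Main Lemma you propose --- functional $J_1$-convergence of
$\big(q^{-1/2}S_{\lfloor\cdot q\rfloor},\,q^{-(1+\a)/2\a}T_{\lfloor\cdot q\rfloor},\,q^{-1/2\a}Y_{\lfloor\cdot q\rfloor}\big)$ to $\big(B,\Delta,Z\circ B\big)$ --- cannot hold as stated, because $Z\circ B$ is not a c\`adl\`ag process: at any jump point $x_0$ of $Z$ hit by $B$, Brownian oscillation destroys both one-sided limits. The paper emphasizes exactly this and therefore formulates its Main Lemma for the triple $(\bar\o^{(q)},\bar S^{(q)},\bar T^{(q)})\to(Z,B,\Delta)$, all of which are c\`adl\`ag, and then obtains $Y$'s fdd's afterwards by pointwise composition (Lemma~\ref{LEMcomp}).

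Second, and more substantively, your assembly hinges on the uniform-in-space convergence \eqref{pp-N}, i.e.\ $\sup_x|q^{-1/2}\widetilde N_{\lfloor tq\rfloor}(\lfloor x\sqrt q\rfloor)-\mu_\xi L_t(x)|\to 0$, and you attribute this to Lemma~\ref{lemma-Nn}. That attribution is incorrect: Lemma~\ref{lemma-Nn} proves only the averaged $\ell^\a$ bound
\begin{equation*}
  \sum_{y\in\Z}\E\big[|\mathcal N_n(y)-\mu_\xi N_n(y)|^\a\big]=O\!\big(n^{1/2+\a/4}\big),
\end{equation*}
which is neither uniform in $y$ nor strong enough, after rescaling, to give a sup-norm estimate. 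The paper deliberately avoids any uniform local-time statement: it feeds the $\ell^\a$ bound into the Kesten--Spitzer characteristic-function computation for the fdd's of $\bar T^{(q)}$, and handles tightness of $\bar T^{(q)}$ separately with a truncation of $\z$. Your Riemann--Stieltjes reduction, by contrast, requires the uniform convergence of the edge-crossing profile (a Borodin-type result, plus extending it from $N_n$ to $\mathcal N_n$), which is a genuinely stronger input that you neither prove nor can extract from the paper's lemmas. Until that gap is closed, the proposal does not complete the proof of the $T$-component of Theorem~\ref{thm2}.
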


As a last remark for this section, observe that neither Theorem \ref{thm1}
nor Theorem \ref{thm2} can be extended to a functional limit theorem w.r.t.\
\emph{any} Skorokhod topology, since the limit processes do not belong to
the Skorokhod space. Indeed $Z \circ B \circ \Delta^{-1}$ and $Z \circ B$
have discontinuities without one-sided limits. This can be seen as
follows: If $x_0$ is a jump discontinuity of $Z$, which exists almost surely,
and $B(t_0) = x_0$, then almost surely $B(t)$ oscillates around $x_0$,
both as $t \to t_0^+$ and as $t \to t_0^-$. Therefore neither limit
\begin{equation}
  \lim_{t \to t_0^\pm} Z \circ B (t)
\end{equation}
exists. The situation is analogous for $Z \circ B \circ \Delta^{-1}$.

Heuristically, the cause of this phenomenon is that the gaps between
targets are non-integrable i.i.d.\ variables. Therefore, from time to time,
the walker will encounter a gap of the same order as the sum of all
the gaps visited till then. The variation in position achieved upon traveling
that gap will thus be of finite order in the limit. Since, as $q \to \infty$,
time shrinks by a factor $q^{-1}$ and space by a factor
$q^{-1/(\a+1)} \ll q^{-1}$, the \tr y segments corresponding to these
exceptionally long gaps become jump discontinuities. On the other
hand, some of these gaps will be traveled on,
back and forth, a larger and larger number of times,
creating in the limit an accumulation of jump discontinuities
of the same size.

\section{Rescaled processes}
\label{sec-flow}

At the core of the proofs lies a functional CLT for the joint distribution
of all the processes involved here: the underlying \rw, the point process
and the collision time process. In this Section we state this limit theorem,
later referred to as the Main Lemma, and use it to derive Theorems
\ref{thm1} and \ref{thm2}.

Fix a value $q \in \R^+$ of the scaling parameter and let the processes
$(\bar{\o}^{(q)} (x), \, x \in \R)$, $(\bar{S}^{(q)} (t), \, t \ge 0)$,
$(\bar{T}^{(q)} (t), \, t \ge 0)$ be defined by
\begin{align}
  \label{o-bar}
  \bar{\o}^{(q)} (x) &:= \frac{ \o_{\lfloor x \sqrt{q} \rfloor} } { q^{1/2\a} } ;
  \\[4pt]
  \label{s-bar}
  \bar{S}^{(q)} (t) &:= \frac{ S_{\lfloor t q\rfloor} } { \sqrt{q} } ; \\[4pt]
  \label{t-bar}
  \bar{T}^{(q)} (t) &:= \frac{ T_{\lfloor t q\rfloor} } { q^{(\a+1)/2\a} }.
\end{align}

\bigskip
\noindent
\textbf{Convention.}\ Throughout the paper, the notation $[a,b]$ denotes
the closed  interval between the real numbers $a$ and $b$, irrespective
of their order.
\bigskip

This is our Main Lemma:

\begin{lemma} \label{main-lemma}
  Under $\P$, as $q \to \infty$, the joint process
  \begin{displaymath}
   \Big( (\bar{\o}^{(q)} (x), \, x \in \R) \,,\, (\bar{S}^{(q)} (t), \, t \ge 0)
   \,,\, (\bar{T}^{(q)} (t'), \, t' \ge 0) \Big)
  \end{displaymath}
  converges to
  \begin{displaymath}
    \Big( (Z (x), \, x \in \R) \,,\,  (B (t), \, t \ge 0)
    \,,\,  (\Delta (t'), \, t' \ge 0) \Big)
  \end{displaymath}
  w.r.t.\ the (product) $J_1$-metric on $\D(\R) \times \D(\R_0^+)
  \times \D(\R_0^+)$. Here $\D(\R)$ is the space of the \tr ies
  $\bar{\o}(x) : \R \into \R$ such that $( \bar{\o}(x) ,\, x \ge 0 )$ and
  $( -\bar{\o}(-x) ,\, x \ge 0 )$ are c\`adl\`ag; the space is endowed
  with the topology induced by the $J_1$-metric on every interval
  $[0, y]$, $y \in \R$ (see \cite[Secs.\ 12 and 16]{Bill}).
\end{lemma}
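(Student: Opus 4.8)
The plan is to prove the Main Lemma in three stages, building up the joint convergence from pieces whose limits are classical, and handling the coupling carefully so that the various continuous-mapping arguments are legitimate in the $J_1$-topologies involved.

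\medskip

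\noindent\textbf{Step 1: joint convergence of the medium and the underlying walk.} First I would establish that, under $\P$, the pair $\big((\bar\o^{(q)}(x),\,x\in\R),\,(\bar S^{(q)}(t),\,t\ge0)\big)$ converges to $\big((Z(x),\,x\in\R),\,(B(t),\,t\ge0)\big)$ with respect to the product $J_1$-metric. Since the medium $\o$ and the dynamics $S$ are independent under $\P$, this reduces to two separate functional limit theorems: for $\bar S^{(q)}$ it is Donsker's theorem (the increments $\xi_j$ are i.i.d., centered, with finite variance $v_\xi$, indeed finite moment of order $\gamma>2/\alpha>2$), giving the Brownian motion $B$ with variance parameter $v_\xi$; for $\bar\o^{(q)}$, the increments $\z_j$ are i.i.d., positive and in the normal domain of attraction of the $\alpha$-stable law of $Z_1$, so $q^{-1/2\alpha}\o_{\lfloor x\sqrt q\rfloor}$ converges in $J_1$, on each side of the origin, to the strictly increasing stable subordinators $Z_+$ and $Z_-$; glued together this is exactly $Z$ on $\D(\R)$. (Here I use that convergence of partial-sum processes of i.i.d.\ variables in the domain of attraction of a stable law with $\alpha<1$ holds in the $J_1$-topology, not merely in distribution, precisely because the limit is a pure-jump subordinator; no centering is needed since $\alpha<1$.) Independence of $\o$ and $S$ then upgrades marginal convergence to joint convergence of the pair.

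\medskip

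\noindent\textbf{Step 2: adding the collision-time process.} The collision time is $T_n=\sum_{k=1}^n|\o_{S_k}-\o_{S_{k-1}}|$, a random walk in the random scenery determined by the increments of $\o$ along the trajectory of $S$. Write $T_{\lfloor tq\rfloor}$ as a functional of $(\o,S)$ and show that, after the scaling $q^{-(\alpha+1)/2\alpha}$, it is asymptotically equal to $\mu_\xi\int_\R L_t^{(q)}(x)\,d\bar\o^{(q)}(x)$ plus a negligible error, where $L^{(q)}$ is the suitably rescaled local time of $\bar S^{(q)}$. This is the heart of the Kesten--Spitzer method: one replaces the increment $|\o_{S_k}-\o_{S_{k-1}}|$, conditionally on the scenery, by its conditional mean times the local-time occupation, the fluctuations being of lower order because $\alpha<1$ makes the scenery dominate. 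The constant $\mu_\xi=\E(|\xi_1|)$ enters because a single step of $S$ of size $\xi$ accumulates roughly $\E(|\xi_1|)$ times the local average gap. Together with Step 1, and the joint convergence $(\bar S^{(q)},L^{(q)})\to(B,L)$ of a walk with its local time (a standard consequence of Donsker plus tightness of local times), one gets that the triple converges to $\big(Z,\,B,\,\mu_\xi(\int L_t(x)\,dZ_+(x)+\int L_t(-x)\,dZ_-(x))\big)=\big(Z,B,\Delta\big)$. The integral functional $(\o,\ell)\mapsto\int \ell\,d\o$ must be shown to be continuous at the limiting point in the relevant topologies — this is where one uses that $x\mapsto L_t(x)$ is a.s.\ continuous and compactly supported, so that the Stieltjes integral against the jump process $Z$ is a.s.\ a continuity point of the map.

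\medskip

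\noindent\textbf{The main obstacle.} The delicate point is precisely the continuity of the random-scenery functional in the $J_1$-topology, together with the joint-convergence bookkeeping: $\D(\R)$ here is equipped with a nonstandard topology (the $J_1$-metric restricted to each $[0,y]$ and glued across the origin), and one has simultaneous convergence of $\bar\o^{(q)}\to Z$ in that space and of the occupation measures of $\bar S^{(q)}$ to those of $B$; one needs the integral $\int L_t^{(q)}(x)\,d\bar\o^{(q)}(x)$ to converge to $\int L_t(x)\,dZ(x)$ \emph{jointly} with the convergence of its ingredients, and uniformly enough in $t$ on compacts to get the full functional statement for $\bar T^{(q)}$. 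Controlling this requires the moment assumption $\gamma>2/\alpha$ on $\xi_1$ (invoked, as the text notes, through the estimate of Lemma \ref{lemma-Nn} on the range of $S$) to ensure that the local time is supported, with high probability, on an interval of the right polynomial size $O(\sqrt q)$ in the original coordinates, so that only $O(q^{1/2\alpha})$-sized contributions of the scenery matter and the error terms are genuinely negligible. Once these estimates are in place, a Skorokhod-representation argument lets one pass to almost-sure versions and conclude by the continuous mapping theorem applied to the Stieltjes-integral functional.
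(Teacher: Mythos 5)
Your overall strategy is genuinely different from the one in the paper. The paper proves the Main Lemma in two disjoint halves: finite-dimensional convergence via the method of characteristic functions (conditioning on $S$, using the explicit L\'evy--Khintchine form of $\phi_{Z_1}$, and deploying the bond-to-site approximation Lemma~\ref{lemma-Nn} to control the discrepancy in the $\alpha$-norm sense $\sum_y \E[|\mathcal{N}_n(y)-\mu_\xi N_n(y)|^\alpha]=O(n^{1/2+\alpha/4})$), followed by a separate tightness argument in the spirit of Kesten--Spitzer's Lemma~7, using truncation of the gaps $\zeta_y$ and second-moment bounds. You instead propose a continuous-mapping route: realize $\bar T^{(q)}$, up to a negligible error, as the Stieltjes-integral functional $\mu_\xi\int L_t^{(q)}(x)\,d\bar\o^{(q)}(x)$ applied to the pair $(\bar\o^{(q)},L^{(q)})$, pass to an almost-sure version by Skorokhod representation, and invoke continuity of the integral map at the limiting triple. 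If it could be carried out, that route would deliver functional convergence of all three coordinates in one step, subsuming tightness; the paper's method, by contrast, only gives f.d.d.\ convergence and then needs tightness as a separate block.

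There is, however, a genuine gap in the step you yourself flag as ``the main obstacle,'' and it is not one that the moment assumption $\gamma>2/\alpha$ repairs. Two things are needed for your continuous-mapping argument and neither is established (nor routine). First, you need joint almost-sure convergence of $(\bar\o^{(q)},L^{(q)}_\cdot)$ to $(Z,L_\cdot)$ with the local-time coordinate converging \emph{uniformly in the space variable} (uniformly over $t$ in compacts), so that, after integration-by-parts against the weakly converging measures $d\bar\o^{(q)}$, the Stieltjes integrals converge. Calling this ``a standard consequence of Donsker plus tightness of local times'' overstates the case: the uniform (in $x$) strong invariance principle for random-walk local times is a nontrivial result in its own right (Borodin, R\'ev\'esz, Perkins), its joint version with the walk path and with the independent scenery is not off-the-shelf, and you would have to import and verify its hypotheses carefully. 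Second, even granting that, the continuous-mapping theorem requires continuity of the functional $(\o,\ell)\mapsto\int\ell\,d\o$ at the realized limit point in the \emph{product} of the topologies you actually have; because $L^{(q)}_t$ is piecewise constant (hence not of controlled total variation) and $\bar\o^{(q)}\to Z$ only in $J_1$, a direct verification is delicate, and you have offered none. The paper's choice of characteristic functions is not cosmetic: it is precisely how Kesten and Spitzer sidestep this continuity problem, since the $\alpha$-norm control of Lemma~\ref{lemma-Nn} plugs in naturally there but would not by itself yield the uniform local-time convergence your Step~2 requires. As it stands, your proposal identifies the right ingredients (bond-versus-site local time, moment condition, compact support of $L_t$) but the core continuity claim is asserted rather than proved, and that is the crux of the lemma.
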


\subsection{Proof of Theorems \ref{thm1} and \ref{thm2}}

If $f: \R \into \R$ is a (not necessarily strictly) increasing \fn, let
$f^{-1}(s) := \sup \{u>0 \,:\, f(u) < s \}$ denote our choice for the
generalized inverse of $f$.

The \fn
\be
  t \mapsto \bar{T}^{( q^{2\a/(\a+1)} )} (t) = \frac{
  T_{\lfloor t q^{2\a/(\a+1)} \rfloor} } q \,,
\ee
cf.\ (\ref{t-bar}), is an increasing c\`adl\`ag \fn\ which is constant on any
interval of the form $[q^{-2\a/(\a+1)} n \,,\, q^{-2\a/(\a+1)} (n+1) )$,
$n \in \N$. Therefore, given $s \in \R^+$,
\be \label{thm1-10}
  \left( \bar{T}^{( q^{2\a/(\a+1)} )}  \right)^{-1} \!\! (s) = q^{-2\a/(\a+1)} \,
  \bar{n} ,
\ee
for some $\bar{n} \equiv \bar{n} (s,q) \in \N$. The previous two equations
imply that $T_{\bar{n}-1} < sq \le T_{\bar{n}}$, whence, in view of (\ref{Y})
and (\ref{x-tilde}),
\be \label{thm1-20}
  \bar X^{(q)}(s) \in \left[ \frac{\o_{S_{\bar{n}-1}}}
  { q^{1/(\a+1)} } \,, \frac{\o_{S_{\bar{n}}}} { q^{1/(\a+1)} } \right] .
\ee

Denoting
\be \label{def-p}
  p \equiv p(q) := q^{2\a/(\a+1)}
\ee
and using (\ref{thm1-10}) and (\ref{s-bar}), we see that
\be
  S_{\bar{n}} = \sqrt{p} \, \bar{S}^{(p)} \! \left( \left( \bar{T}^{(p)}
  \right)^{-1} (s)  \right) ,
\ee
whence, in view of (\ref{o-bar}),
\be \label{o-s-barn}
  \o_{S_{\bar{n}}} = p^{1/2\a} \, \bar{\o}^{(p)} \! \left( \bar{S}^{(p)} \left(
  \left( \bar{T}^{(p)} \right)^{-1} (s) \right) \right) .
\ee
Since $p^{1/2\a} = q^{1/(\a+1)}$, (\ref{thm1-20}) becomes
\be \label{X-interval}
  \bar{X}^{(q)}(s) \in \left[ \bar{\o}^{(p)} \circ \bar{S}^{(p)} \left( \left(
  \bar{T}^{(p)} \right)^{-1} (s)  - p^{-1} \right)  , \, \bar{\o}^{(p)} \circ
  \bar{S}^{(p)} \circ \left( \bar{T}^{(p)} \right)^{-1} (s) \right] .
\ee

Let $(q_k)_{k\in\Z^+}$ be a positive sequence of real numbers
such that $q_k \to +\infty$ as $k \to \infty$, and let $p_k := p(q_k)$ as
defined in (\ref{def-p}) above.

\bigskip
\noindent
\textbf{Assumption.}\ From now on we assume that the convergence in
the statement of Lemma \ref{main-lemma}, restricted to $q=p_k$, $k \to
\infty$, holds almost everywhere. If this is not the case, by virtue of
the Skorokhod representation theorem \cite[Thm.~3]{Dud}, there exists a
\pr\ space where it does (for processes which have the same joint
distribution as the ones used in Lemma \ref{main-lemma}): as we shall
see, the specifics of the \pr\ space are irrelevant. In particular,
up to discarding a null set of realizations, we may assume that
$B : [0,+\infty) \into \R$ is continuous and $\Delta : [0,+\infty) \into
[0,+\infty)$ is continuous, strictly increasing and bijective.
\medskip

\begin{lemma}\label{LEM0}
(Almost) surely, for all $s>0$,
$$
  \lim_{k \to \infty} \left( \bar T^{(p_k)} \right)^{-1} \! (s) = \Delta^{-1}(s) \,.
$$
\end{lemma}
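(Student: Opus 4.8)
The plan is to deduce the convergence of the generalized inverses from the already-assumed a.e.\ convergence $\bar T^{(p_k)} \to \Delta$ (in the $J_1$-metric on every interval $[0,y]$) together with the nice properties of the limit $\Delta$, namely that it is continuous, strictly increasing and bijective from $[0,+\infty)$ onto $[0,+\infty)$. The standard fact here is that if increasing functions $g_k$ converge to a continuous strictly increasing bijection $g$ locally uniformly, then $g_k^{-1} \to g^{-1}$ pointwise (indeed locally uniformly); and $J_1$-convergence on $[0,y]$ to a \emph{continuous} limit is just uniform convergence on $[0,y]$. So the first step is to record that, under the Assumption, $\bar T^{(p_k)} \to \Delta$ uniformly on compact subsets of $[0,+\infty)$.

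Next I would fix $s>0$ and argue directly with the definition $f^{-1}(s) = \sup\{u>0 : f(u) < s\}$. Write $u := \Delta^{-1}(s)$, so that $\Delta(u) = s$ by continuity and bijectivity, and $\Delta(u') < s < \Delta(u'')$ for $u' < u < u''$ by strict monotonicity. Given $\e>0$ small, pick $y$ large enough that $u + 1$ lies well inside $[0,y]$, and set $\d := \min\{\,s - \Delta(u-\e),\ \Delta(u+\e) - s\,\} > 0$. By the uniform convergence on $[0,y]$, for $k$ large we have $|\bar T^{(p_k)}(v) - \Delta(v)| < \d$ for all $v \in [0,y]$. Then $\bar T^{(p_k)}(u-\e) < \Delta(u-\e) + \d \le s$, so the sup defining $(\bar T^{(p_k)})^{-1}(s)$ is at least $u-\e$; and $\bar T^{(p_k)}(u+\e) > \Delta(u+\e) - \d \ge s$, and since $\bar T^{(p_k)}$ is increasing this forces $\bar T^{(p_k)}(v) \ge s$ for all $v \ge u+\e$, so the sup is at most $u+\e$. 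Hence $|(\bar T^{(p_k)})^{-1}(s) - u| \le \e$ for all large $k$, which is the claim.

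One small technical point to handle carefully is that $\bar T^{(p_k)}$ is only nondecreasing, not strictly increasing (it is a step function, constant on the dyadic-type intervals noted before \eqref{thm1-10}), so its generalized inverse could in principle jump; but the one-sided estimates above only use monotonicity of $\bar T^{(p_k)}$ and strict monotonicity of the \emph{limit} $\Delta$, so this causes no trouble. A second point is the quantifier order: the exceptional null set must be chosen once (it is the one from the Assumption, outside which $\bar T^{(p_k)} \to \Delta$ locally uniformly and $\Delta$ is a continuous increasing bijection), and then the argument above runs for every $s>0$ simultaneously on that full-measure event.

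The main obstacle, such as it is, is purely bookkeeping: making sure that ``$J_1$-convergence on $[0,y]$ for every $y$'' genuinely upgrades to local uniform convergence (which it does precisely because the limit $\Delta$ is continuous — a well-known feature of the Skorokhod $J_1$ topology), and that the Assumption indeed delivers the continuity/strict monotonicity/bijectivity of $\Delta$ used above, as is in fact stated there. No substantive estimate is required; the content is entirely the elementary continuity of inversion at a strictly monotone continuous point, applied along the subsequence $q = p_k$.
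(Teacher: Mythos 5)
Your proof is correct and follows essentially the same route as the paper's: both use the continuity and strict monotonicity of $\Delta$ to produce the separation $\Delta(u-\e) < s < \Delta(u+\e)$ and then pin $(\bar T^{(p_k)})^{-1}(s)$ between $u-\e$ and $u+\e$ via one-sided bounds on $\bar T^{(p_k)}$ near $u$. The only presentational difference is that you first invoke the standard fact that $J_1$-convergence on $[0,y]$ to a continuous limit is uniform convergence, which lets you work with a sup-norm bound $\d$ directly, whereas the paper keeps the $J_1$-distance and manipulates the time-change homeomorphism $\varphi_k$ explicitly; the substance is the same.
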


\begin{proof}

Since $\Delta$ is continuous and strictly increasing, so is the inverse
$\Delta^{-1}$. For a given $s>0$ and for every sufficiently small
$\theta>0$, we set $u:=\Delta^{-1}(s)$, $x_\theta := \Delta^{-1}
(s - \theta/2)$ and $y_\theta := \Delta^{-1}(s + \theta/2)$, so that
\be
  \Delta(x_\theta)=s-\frac\theta 2< s=\Delta(u)<\Delta(y_\theta)=s +
  \frac\theta 2 \,.
\ee
Then, for any given $\e>0$, we choose $\theta>0$  such that
$u-x_\theta < \e/2$ and $y_\theta - u< \e/2$. Moreover, we set $\varsigma
:= \min\{ \theta/2, u-x_\theta, y_\theta-u, \e/2 \}$ and notice that for $k$
big enough, we have $d_{J_1, [0,u+1]} (\bar T^{(p_k)}, \Delta) < \varsigma$,
where the l.h.s.\ of the inequality denotes the Skorokhod $J_1$-distance
on the interval $[0,u+1]$. In other words, there exists a strictly increasing
homeomorphism $\varphi_k : [0,u+1] \into [0,u+1]$ such that, for all
$v\in[0,u+1]$,
\begin{align}
  |v-\varphi_k(v)| &< \varsigma \,; \\
  |\Delta(v)-\bar T^{(p_k)}(\varphi_k(v))| &< \varsigma \,.
\end{align}
Hence, for $k$ big enough, one has
\begin{align}
  \bar T^{(p_k)}(\varphi_k(y_{\theta})) &> \Delta(y_\theta)-\varsigma=s+\frac
     \theta 2-\varsigma \ge s \,;\\
  \bar T^{(p_k)}(\varphi_k(x_{\theta})) &< \Delta(x_\theta)+\varsigma=s-\frac
     \theta 2+\varsigma\le s \,,
\end{align}
whence
\begin{align}
  \left(\bar T^{(p_k)}\right)^{-1} \! (s) &\ge \varphi_k(x_{\theta})> x_{\theta}-
    \varsigma> u-\frac\e 2-\varsigma \ge u-\e \,; \\
  \left(\bar T^{(p_k)}\right)^{-1} \! (s) &\le\ \varphi_k(y_\theta)< y_{\theta}+
  \varsigma< u+\frac\e 2+\varsigma\le u+\e \,.
\end{align}
Since $\e$ was arbitrary, this proves the stated convergence.
\end{proof}
Observe now that $Z$ and $B$ are independent and that the discontinuities
of $Z$ occur at random points whose distribution has no atom in $\R$.
Therefore, for every $t\ge 0$, $Z$ is continuous at $B(t)$, equivalently, $Z$
does not have a jump at $B(t)$, with probability 1. This ensures that the
following lemma holds true for almost every realization of $(B,Z)$.

\begin{lemma}\label{LEMcomp}
Let $t>0$ and consider a realization of our processes such
that $Z$ is continuous at $B(t)$. For any sequence $(a_k) \subset
\R^+$ with $\lim_{k \to \infty} a_k = t$, we have:
$$
  \lim_{k\to\infty} \bar \o^{(p_k)}(\bar S^{(p_k)}(a_{k}))=Z(B(t)).
$$
\end{lemma}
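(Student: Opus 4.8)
The plan is to combine the $J_1$-convergence $\bar\o^{(p_k)} \to Z$ (from the Main Lemma and the Skorokhod-representation Assumption) with the uniform convergence $\bar S^{(p_k)} \to B$ (same source: on a finite interval, $J_1$-convergence to a continuous limit is equivalent to uniform convergence) and the continuity of $Z$ at the target point $B(t)$. The subtlety is that $Z$ itself is only c\`adl\`ag, not continuous, so one cannot simply feed a convergent sequence of arguments into $\bar\o^{(p_k)}$ and pass to the limit; one must exploit that the specific limiting argument $B(t)$ is a continuity point of $Z$ and that the arguments $\bar S^{(p_k)}(a_k)$ converge to it.

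First I would fix $\e>0$. By continuity of $Z$ at $x_0:=B(t)$, choose $\d>0$ such that $|Z(x)-Z(x_0)|<\e/2$ whenever $|x-x_0|<\d$. Next, using uniform convergence of $\bar S^{(p_k)}$ to $B$ on $[0,t+1]$ together with the assumed continuity of $B$ and $a_k\to t$, I would conclude that $\bar S^{(p_k)}(a_k)\to B(t)=x_0$; hence for $k$ large, $|\bar S^{(p_k)}(a_k)-x_0|<\d/2$. The remaining job is to show $\bar\o^{(p_k)}(\bar S^{(p_k)}(a_k))\to Z(x_0)$, and for this I would bound $|\bar\o^{(p_k)}(y_k)-Z(x_0)|$ for any point $y_k$ with $|y_k-x_0|<\d/2$. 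Since $\bar\o^{(p_k)}\to Z$ in $J_1$ on $[0,y]$ for each fixed $y$, pick $y>x_0+\d$ (and symmetrically handle the negative half-line via the $-\bar\o(-x)$ c\`adl\`ag requirement in $\D(\R)$) and let $\psi_k$ be the time-change homeomorphisms realizing the $J_1$-distance: $\sup_v|\psi_k(v)-v|\to0$ and $\sup_v|\bar\o^{(p_k)}(\psi_k(v))-Z(v)|\to0$. Writing $y_k=\psi_k(v_k)$ for the appropriate $v_k$, one gets $|v_k-x_0|\le |v_k-y_k|+|y_k-x_0|<\d$ for $k$ large, so $|Z(v_k)-Z(x_0)|<\e/2$ by the choice of $\d$, and $|\bar\o^{(p_k)}(y_k)-Z(v_k)|<\e/2$ by the $J_1$ estimate; the triangle inequality closes the argument.

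The main obstacle is the bookkeeping around the $J_1$ time-change: one must be careful that the $J_1$-metric on $\D(\R)$ is the one built from $J_1$ on each interval $[0,y]$ (and, separately, on $[-y,0]$ via $x\mapsto-\bar\o(-x)$), so a single $\e$ requires first choosing the interval $[0,y]$ large enough to contain a neighborhood of $x_0$, then extracting the homeomorphism $\psi_k$ on that interval, and only then letting $k\to\infty$. A minor point to address explicitly is the case $B(t)=0$, or $B(t)$ exactly an endpoint of the chosen interval, which is handled by enlarging $y$ slightly; and one should note that $\bar S^{(p_k)}(a_k)$ lies in $[0,t+1]$ (or in a fixed compact set) for $k$ large, so the arguments never escape to infinity. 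Everything else is a routine $\e$-$\d$ chase.
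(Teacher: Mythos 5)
Your proposal is correct and takes essentially the same route as the paper: $\e$-$\delta$ exploitation of the continuity of $Z$ at $B(t)$, convergence of $\bar S^{(p_k)}(a_k)$ to $B(t)$, and the $J_1$ time-change homeomorphism for $\bar\o^{(p_k)}\to Z$ on a compact interval around $B(t)$, with the triangle inequality to close. The only cosmetic difference is that you invoke the standard fact that $J_1$-convergence to a continuous limit is locally uniform to handle $\bar S^{(p_k)}\to B$, whereas the paper writes out the time-change $\varphi_k$ for $\bar S^{(p_k)}$ explicitly; this is a harmless shortcut, not a different argument.
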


\begin{proof}
Let $\e \in (0,1)$ and $\eta\in(0,\e)$ be such that
\be \label{lem00-1}
  \sup_{x\,:\,|x-B(t)|<2\eta} |Z(B(t))-Z(x)|<\frac \e 2\,.
\ee
Also choose $\varsigma \in (0, \eta/2)$ so that
\be
  \sup_{v \,:\, |v-t|<\varsigma}|B(t)-B(v)|<\frac \eta 2\,.
\ee
Let $k$ be big enough so that $|a_{k}-t| < \varsigma/2$ and $d_{J_1,[0,t+1]}
(\bar S^{({p_k})},B) < \varsigma/2$. Thus there exists an increasing
homeomorphisms $\varphi_k$ of $[0,\Delta^{-1}(s)+1]$ such that, for all
$v\in[0,t+1]$,
\begin{align}
  & |v-\varphi_k(v)| < \frac{\varsigma} 2 \,; \\
  & |\bar S^{({p_k})}(v) - B(\varphi_k(v))| < \frac{\varsigma} 2 \,.
\end{align}
Hence
\be
  |\bar S^{({p_k})}(a_{k})-B(t)| \le \frac{\varsigma} 2 + |B(\varphi_k(a_{k}))-B(t)|
  < \frac{\varsigma} 2+\frac\eta 2<\eta\, ,
\ee
since $|\varphi_k(a_{k})-t|\le |\varphi_k(a_{k})-a_{k}|+|a_{k}-t|<\varsigma$.

Assume moreover that $k$ is so big that
\begin{align}
  & d_{J_1,[0,|B(t)|+1]}(\bar \o^{({p_k})} ,Z) < \frac\eta 2 \,; \\
  & d_{J_1,[0,|B(t)|+1]}(\bar \o^{({p_k})} (-(\cdot)), Z({-(\cdot}))) < \frac\eta 2\,.
\end{align}
Then there exists a strictly increasing homeomorphism $\psi_k$ of
$[-|B(t)|-1,|B(t)|+1]$, with $\psi_k(0)=0$, such that, for all $w \in [-|B(t)|-1,|B(t)|+1]$,
\begin{align}
  & |w-\psi_k(w)| < \frac\eta 2 \,; \\
  & |\bar\o^{({p_k})}(w) - Z(\psi_k(w))| < \frac\eta 2 \,.
\end{align}
Therefore $|\bar\o^{({p_k})}(\bar S^{({p_k})}(a_{k})) - Z(\psi_k(\bar S^{({p_k})}(a_{k})))|
<\eta/2$ and
\be
\begin{split}
  & |\psi_k(\bar S^{({p_k})}(a_{k}))-B(t)| \\
  &\qquad \le |\psi_k(\bar S^{({p_k})}(a_{k})) - \bar S^{({p_k})}(a_{k})|+
  |\bar S^{({p_k})}(a_{k})-B(t)| \\
  &\qquad < 2\eta \,.
\end{split}
\ee
whence, by (\ref{lem00-1}),
\be
\begin{split}
  & |\bar\o^{({p_k})}(\bar S^{({p_k})}(a_{k}))-Z(B(t))| \\
  &\qquad \le \frac\eta 2 +|Z(\psi_k(\bar S^{({p_k})}(a_{k})))-Z(B(t))| \\
  &\qquad <\frac\eta 2+\frac\e 2 < \e \,.
\end{split}
\ee
We conclude that
$\lim_{k\to\infty} \, \bar\o^{({p_k})}(\bar S^{({p_k})}(a_{k}))= Z(B(t))$.
\end{proof}

We are now ready to complete the proofs of Theorems \ref{thm1}
and \ref{thm2}.

\begin{proof}[Proof of Theorem \ref{thm1}]
Let $m\in\mathbb Z^+$ and $s_1,\ldots,s_m\in[0,+\infty)$ and notice
that with probability one, $Z$ is continuous at $B(\Delta^{-1}(s_i))$,
for every $i\in\{1,\ldots,m\}$.

From the convergence stated in Lemma \ref{LEM0}, for every
$i\in\{1,\ldots,m\}$ we can apply Lemma \ref{LEMcomp} twice: first with
$a_k = (\bar T^{(p_k)})^{-1}(s_i)$ and $t = \Delta^{-1}(s_i)$, then
with $a_k = (\bar T^{(p_k)})^{-1}(s_i) - q_k^{-2\a/(\a+1)}$ and again
$t = \Delta^{-1}(s_i)$. We then obtain, for all $i=1\ldots, m$,
\be
\begin{split}
  \lim_{k\to\infty} \bar\o^{({p_k})} \!\left( \bar S^{({p_k})} \!\left( \left(
  \bar T^{(p_k)}\right)^{-1} \! (s_i) \right) \right) &= \lim_{k\to\infty}
  \bar\o^{({p_k})} \!\left( \bar S^{({p_k})} \!\left( \left(\bar T^{(p_k)}\right)^{-1}
  \! (s_i) -p_k^{-1} \right) \right) \\
  &= Z ( B ( \Delta^{-1}(s_i) ) ) \,.
\end{split}
\ee
Hence, from (\ref{X-interval}), it follows that, almost surely,
\be
  \lim_{k\to\infty} \left(\bar X^{(q_k)}(s_1), \ldots, \bar X^{(q_k)}
  (s_m) \right) = \left( Z(B(\Delta^{-1}(s_1))), \ldots, Z(B(\Delta^{-1}(s_m)))
  \right) .
\ee
The convergence then holds in distribution, for any choice of $(q_k)$.
This concludes the proof of Theorem \ref{thm1}.
\end{proof}

\medskip

\begin{proof}[Proof of Theorem \ref{thm2}] Let $m\in\mathbb \Z^+$
and $t_1,\ldots,t_m\in[0,+\infty)$. Fix an arbitrary sequence
$q_k \to +\infty$. It suffices to prove the assertion of the theorem for
$q=q_k$.

For every $i\in\{1,\ldots,m\}$ we get by definition (\ref{t-bar}) that
\be \label{thm2-1}
  \frac{T_{\lfloor t_i q \rfloor}} {q^{(\a+1)/2\a}} = \bar T^{(q)}(t_i) \,.
\ee
Also, combining definitions (\ref{o-bar})-(\ref{s-bar}) as in
(\ref{o-s-barn}) gives
\be \label{thm2-2}
  \frac{Y_{\lfloor t_i q \rfloor}} {q^{1/2\a}} = \bar\o^{(q)} \! \left(
  \bar S^{(q)} (t_i) \right) .
\ee

Now, with probability one $Z$ is continuous at $B(t_1), \ldots, B(t_m)$.
We restrict to such realizations. Fix a sequence $q_k \to +\infty$. We use
(\ref{thm2-1}) with the almost sure version of the Main Lemma
\ref{main-lemma}, for $q = q_k$, and (\ref{thm2-2}) with Lemma
\ref{LEMcomp}, for $p_k = q_k$ and $a_k = t = t_i$. This proves the a.s.\
convergence of the l.h.sides of (\ref{thm2-1})-(\ref{thm2-2}), along
the sequence $(q_k)$, to $\Delta(t_i)$ and $Z(B(t_i))$, respectively.
Considering the joint convergence for all $i\in\{1,\ldots,m\}$ and
passing to distributional convergence proves the desired assertion.
\end{proof}

\section{Proof of the Main Lemma}
\label{sec-main}

The proof of Lemma \ref{main-lemma} follows a standard argument
for the convergence of processes and will be split into two parts:
convergence of the finite-dimensional distributions and tightness.
Beyond technicalities, the main ingredient of the proof is a
representation of the collision time as a suitable random walk in
random scenery. Before getting to the proofs proper, it is worth
clarifying this idea, introducing certain quantities and giving some
estimates that will be used further on.

\subsection{Collision time and random scenery}

Let us first rewrite $T_n$ as
\be \label{colltime-RWRS}
  T_n := \sum_{y\in\Z} \mathcal N_n(y) \, \z_y,
\ee
where $\mathcal N_n(y)$ denotes the \emph{local time on the bonds}
for the underlying \rw:
\be
  \mathcal N_n(y):= \#\{k \in \{1,\ldots,n\} \,:\, [y-1,y] \subseteq
  [S_{k-1}, S_k]\} \, .
\ee
In other words, $\mathcal N_n(y)$ is the number of times, up to
the $n^\mathrm{th}$ jump, that the walk $X$ travels the
$y^\mathrm{th}$ gap $[\o_{y-1}, \o_y]$.
Notice that, given $S$, $T_n$ is a sum of independent random
variables. In particular, eq.~(\ref{colltime-RWRS}) provides an
interpretation of the collision time as a random walk in random
scenery, and suggests the convergence stated in the Main Lemma.
To formalize this connection it is useful to introduce more quantities
pertaining to the underlying \rw\ $S$, such as
the standard local time, or \emph{local time on the sites}:
 \be
  N_n(y):=\#\{k \in \{0,\ldots,n-1\} \,:\, S_k=y \} \, ;
\ee
the \emph{range}:
\be
  R_n:=\# \{y\in\Z \,:\, N_n(y)>0 \} \, ;
\ee
and the \emph{self-intersection}:
\be
  V_n:=\sum_{k,\ell=0}^{n-1} \mathbf 1_{\{S_k=S_\ell\}}=\sum_{y\in\Z}
  \sum_{k,\ell=0}^{n-1} \mathbf 1_{\{S_k=S_\ell=y\}}=\sum_{y\in\Z}
  (N_n(y))^2 \,.
\ee

We recall (see, e.g., \cite{DE}) that
\be \label{range}
  \E[R_n] = O(\sqrt{n}) \,.
\ee
Moreover, as noticed in \cite{ks}, the local limit theorem for $S$ implies
that
\be \label{expectationVn}
  \E[V_n]=\sum_{k,\ell=0}^{n-1}\mathbb P(S_k=S_\ell) = n + 2 \!\!
  \sum_{0\le k<\ell\le n-1} \!\! \mathbb P(S_{\ell-k}=0) = O(n^{3/2}) \,.
\ee
Therefore, by H\"older's inequality, for every $\beta\in (0,2]$ we have
\be \label{sumNnbeta}
\begin{split}
  \sum_{y\in\Z} \E \! \left[(N_n(y))^\beta \right] &\le \left(\E \! \left[
  \sum_{y\in\Z}{\mathbf 1}_{\{N_n(y)>0\}} \right] \right)^{1-\beta/2}
  \left(\E \! \left[ \sum_{y\in\Z} (N_n(y))^2 \right] \right)^{\beta/2} \\
  &= \left(\E [R_n] \right)^{1-\beta/2}  \left(\E [V_n] \right)^{\beta/2}
 = O \! \left( n^{(1+\beta)/2} \right) .
\end{split}
\ee

For later purposes we study certain properties of the local time $N_n$.
To state the first result, we need to introduce $\tau_n(x)$, the time of the
$n^\mathrm{th}$ visit to $x\in\Z$. We define $\tau_0(x):=0$ and, for $n \ge 1$,
\be
  \tau_n(x):=\inf\{m>\tau_{n-1}(x)\, :\, S_m=x\} \,.
\ee
Then we have the following:

\begin{lemma}\label{LEM00}
For every $\beta\in(0,1]$, there exists $K_0>0$ such that, for every
$x,y\in\mathbb Z$,
\begin{equation*}
\begin{split}
  \mathbb E \! \left[\left |N_n(x)-N_n(y)\right |^\beta\right] &\le K_0 \left(|x-y|^{\beta/2} \,
  \mathbb E \! \left[\left(N_n(x)\right)^{\beta/2} + \left(N_n(y)\right)^{\beta/2} \right]
  \right. \\
  &\quad \left. + \left(1+\left( |x-y|\log n \right)^\beta\right) \left(\mathbb P(\tau_1(x)\le n)+\mathbb P(\tau_1(y)\le n)\right)\right) .
\end{split}
\end{equation*}
\end{lemma}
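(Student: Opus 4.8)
The plan is to estimate $N_n(x) - N_n(y)$ by a telescoping-type decomposition that compares the two local times along the successive visits to $x$ and $y$. Assume without loss of generality that $N_n(x) \le N_n(y)$ (the bound is symmetric in $x,y$). The basic idea is that every excursion of $S$ that contributes to $N_n(y)$ but not to $N_n(x)$ (or vice versa) must correspond to a stretch of the walk that, between two consecutive visits to one of the two points, fails to visit the other. I would first reduce to the case $x \ne y$ (for $x=y$ the left-hand side is $0$), and then, by symmetry and by conditioning on which of $x,y$ is visited first after time $0$ (and whether either is visited at all within time $n$, which is where the factor $\mathbb P(\tau_1(x)\le n)+\mathbb P(\tau_1(y)\le n)$ enters), reduce to estimating a difference of local times of a walk started at one of the two points.

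Concretely, I would use the strong Markov property at the successive visit times $\tau_j(x)$: between $\tau_j(x)$ and $\tau_{j+1}(x)$ the walk performs one excursion from $x$, and during that excursion it visits $y$ some number of times, say $G_j$; these $G_j$ are i.i.d.\ (for the walk started at $x$) and one has $N_n(y) = \sum_{j} G_j + (\text{boundary terms})$ roughly, so that $|N_n(x)-N_n(y)|$ is controlled by $|\sum_{j=1}^{N_n(x)} (1 - G_j)|$ plus a bounded number of boundary excursions. The point is that $\mathbb E[G_j] = 1$ by a standard last-exit / Green's function identity for recurrent walks (the expected number of visits to $y$ in one excursion from $x$ equals $1$), so $\sum_j (1-G_j)$ is a martingale, or at least a sum of i.i.d.\ centered terms, and after conditioning on $N_n(x)$ one can apply a Marcinkiewicz--Zygmund / von Bahr--Esseen inequality for the moment of order $\beta \le 1$. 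This produces the term $|x-y|^{\beta/2}\,\mathbb E[(N_n(x))^{\beta/2}]$: the $|x-y|$ comes from bounding the $\beta$-th (in fact first) moment of $G_j - 1$, which is of order $|x-y|$ because one excursion from $x$ reaching $y$ has probability $\asymp 1/|x-y|$ while making $\asymp |x-y|$ visits to $y$ when it does reach it (one-dimensional gambler's-ruin estimates); and the exponent $\beta/2$ on $N_n(x)$ comes from the square-root gain in the moment inequality applied to the sum of $N_n(x)$ centered terms.

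The factor $(1 + (|x-y|\log n)^\beta)$ multiplying $\mathbb P(\tau_1(x)\le n) + \mathbb P(\tau_1(y)\le n)$ should absorb the boundary/tail contributions: the last, incomplete excursion from $x$ running up to time $n$, and the control of excursions that are anomalously long. Here I would use that within $n$ steps the walk cannot travel much farther than $\sqrt{n}\,\mathrm{polylog}$, so the number of visits to $y$ in any single excursion up to time $n$ is at most of order $|x-y|\log n$ with overwhelming probability, and truncating at that level contributes the $(|x-y|\log n)^\beta$ term while the complementary event has negligible probability absorbed into the constant $K_0$; the indicator that $x$ (resp.\ $y$) is ever hit by time $n$ is exactly what makes these boundary terms vanish when neither point is reached.

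The main obstacle I anticipate is the bookkeeping of the boundary excursions and the passage from "walk started at $0$" to "walk started at $x$": one must handle the first excursion (before $\tau_1(x)$), the last excursion (after the last visit to $x$ before time $n$), and the possibility that $x$ is visited but $y$ is not (or conversely), uniformly in $x,y,n$, and package all of these into the single clean right-hand side above — in particular getting the conditioning right so that the i.i.d.\ structure of the $G_j$'s is genuinely available and the moment inequality for exponents $\beta \le 1$ applies with a universal constant. The probabilistic estimates themselves ($\mathbb E[G_j]=1$, $\mathbb E[G_j - 1]^{+} \asymp |x-y|$, the tail bound on a single excursion) are standard one-dimensional random-walk facts that I would quote from the local limit theorem and gambler's ruin, so the real work is the combinatorial decomposition and the uniformity.
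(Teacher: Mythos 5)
Your decomposition is essentially the one used in the paper: split $N_n(y)$ into the numbers $M_j(x,y)$ (your $G_j$) of visits to $y$ between consecutive visits to $x$, use $\E[M_j]=1$ to make $\sum_j(1-M_j)$ a centered martingale sum with a cutoff at $\tau_j(x)\le n$, control that sum by a Burkholder-type moment inequality, and control the residual excursions and the last incomplete excursion by the maximum of the $M_j$'s, which is where the $\log n$ appears. You correctly flag that making the i.i.d./martingale structure usable under the stopping at time $n$ is the delicate bookkeeping; the paper does this by writing $\sum_{j=1}^{N_n(x)-1}(1-M_j)=\sum_{j=1}^n(1-M_j)\mathbf 1_{\{\tau_j(x)\le n\}}$ on the event $\{\tau_1(x)\le\tau_1(y)\}$, so that each summand is adapted to $\mathcal F_{\tau_{j+1}(x)}$ and Burkholder's inequality applies directly.

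Two of your claimed mechanisms are, however, genuinely off and would break a full write-up. First, the factor $|x-y|^{\beta/2}$ does not come from a first (or $\beta$-th) moment of $M_j-1$: one computes $\E\bigl[|M_j-1|\bigr]=O(1)$ uniformly in $|x-y|$, because the large conditional mean $\asymp|x-y|$ is exactly compensated by the small hitting probability $\asymp 1/|x-y|$, and the triangle inequality $\E[|\sum X_j|^\beta]\le\sum\E[|X_j|^\beta]$ for $\beta\le 1$ would then only give $O(\E[N_n(x)])$ with no square-root gain. The quantity of order $|x-y|$ is the conditional second moment $\E[M_j^2\mid\mathcal F_{\tau_j(x)}]\le K_2|x-y|$, and the exponent $\beta/2$ on both $|x-y|$ and $N_n(x)$ comes from Burkholder raising the quadratic variation $\sum_j\E[M_j^2\mid\mathcal F_{\tau_j(x)}]\mathbf 1_{\{\tau_j(x)\le n\}}\lesssim|x-y|N_n(x)$ to the power $\beta/2$. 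Second, the factor $(|x-y|\log n)^\beta$ is not obtained from a range bound on the walk; a diffusive range bound constrains where the walk goes, not how many times a single excursion can hit $y$. The paper instead uses the explicit law of $M_j$ (a product of a geometric and a Bernoulli, both with parameter $p(|x-y|)\sim c/|x-y|$), which yields exponential tails at scale $|x-y|$, so $\E\bigl[\max_{j\le n}M_j\bigr]\lesssim|x-y|\log n$ by the standard computation for the maximum of $n$ exponentials. With these two repairs your plan matches the paper's proof.
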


\begin{proof}
As in \cite{ks}, for $x,y \in \Z$, we let $M_j(x,y)$ denote the number of visits of the
random walk $S$ to $y$ between the $j^\mathrm{th}$ and the $(j+1)^\mathrm{th}$
visit to $x$:
\be
  M_j(x,y) := \!\!\ \sum_{m=\tau_j(x)+1}^{\tau_{j+1}(x)}\mathbf 1_{\{S_m=y\}} \, .
\ee
Note that $M_j(x,y)$ is independent of the $\sigma$-algebra
$\mathcal F_{\tau_j(x)}$, where $\mathcal F_n:=\sigma(S_1, \ldots ,S_n)$.
For reasons of symmetry, it is enough to study
\be
  \mathbb E \! \left[\left |N_n(x)-N_n(y)\right|^\beta
  \mathbf 1_{\{\tau_1(x)\le\tau_1(y)\}} \right] .
\ee

Let us observe that
\be
\begin{split}
  &\left( N_n(x)-N_n(y) \right) \mathbf 1_{\{\tau_1(x)\le\tau_1(y)\}} \\
  &\qquad = \left(\mathbf 1_{\{\t_1(x)\leq n\}} + \!\! \sum_{j=1}^{N_n(x)-1}
  (1-M_j(x,y)) - \!\! \sum_{k=\tau_{N_n(x)}(x)}^{n}\mathbf 1_{\{S_k=y\}}\right)
  \mathbf 1_{\{\tau_1(x)\le\tau_1(y)\}} \, .
\end{split}
\ee
Using that, for all $a,b \in \C$ and $\beta\in [0,1]$,
\be \label{tr-in-beta}
  |a+b|^\beta \le |a|^\beta + |b|^\beta \, ,
\ee
we obtain
\begin{equation} \label{AAAA0}
  \mathbb E \! \left[(N_n(x)-N_n(y))^\beta \, \mathbf 1_{\{\tau_1(x)\le\tau_1(y)\}}
  \right] \le \P(\t_1(x)\leq n)+ \mathcal A_n(x,y)+\mathcal B_n(x,y)\, ,
\end{equation}
where
\begin{align}
  \mathcal A_n(x,y) &:= \mathbb E \! \left[\left(\sum_{j=1}^{N_n(x)-1}
  (1-M_j(x,y))\right)^{\beta}\mathbf 1_{\{\tau_1(x)\le\tau_1(y)\}}\right] ;
  \label{termA} \\
  \mathcal B_n(x,y) &:= \mathbb E \! \left[\left(\sum_{k=\tau_{N_n(x)}(x)}^{n}
  \mathbf 1_{\{S_k=y\}}\right)^\beta \mathbf 1_{\{\tau_1(x)\le\tau_1(y)\leq n\}}
  \right] .
\end{align}
%
By the independence of $(M_j(x,y), \, j\ge 1)$ and $\tau_1(x)$ we get
\begin{equation} \label{AAAA1a}
\begin{split}
  \mathcal B_n(x,y) \le \mathcal B'_n(x,y) &:= \mathbb E\left[\max_{1\le j\le n}
  |M_j(x,y)|^\beta \, \mathbf 1_{\{\tau_1(x)\le n\}}\right] \\
  &\le \mathbb P(\tau_1(x)\le n) \, \mathbb E \! \left[\max_{1\le j\le n}|M_j(x,y)|^\beta
  \right] .
\end{split}
\end{equation}
Moreover, by \cite[proof of Lem.\ 2]{ks}, it turns out that the random variables
$M_j(x,y)$ are mutually independent, with distribution given by the product of
a geometric random variable and a Bernoulli random variable, independent and
both with parameter $p(|x-y|)$, where $p: \N \into [0,1]$ is a function such that
$p(r)\sim c r^{-1}$, for some $c>0$, as $r\to +\infty$.
In particular, the variables $M_j(x,y)$ are stochastically dominated by
independent geometric variables $\widetilde M_j(x,y)$ with parameter $p(|x-y|)$,
that in turn are dominated by independent exponential variables $M'_j(x,y)$
with parameter $\lambda:=\log (1-p(|x-y|))$.
Therefore, by Jensen's inequality and since $\beta\le 1$,
\begin{equation}\label{AAAA1b}
\begin{split}
  \mathbb E \! \left[\max_{1\le j\le n}|M_j(x,y)|^\beta\right]
  &\le 1+ \mathbb E \! \left[\max_{1\le j\le n}|M'_j(x,y)|^\beta\right] \\
  &\le 1+ \mathbb E \! \left[\max_{1\le j\le n}M'_j(x,y)\right]^\beta .
\end{split}
\end{equation}
But
\be \label{AAAA1c}
\begin{split}
  \mathbb E\! \left[\max_{1\le j\le n}M'_j(x,y)\right] &= \int_0^{+\infty}
  \mathbb P \! \left(\max_{1\le j\le n}M'_j(x,y)\ge t\right) dt \\
  &= \int_0^{+\infty} \left(1-\mathbb P \! \left(\max_{1\le j\le n}M'_j(x,y)< t\right)\right)
  dt \\
  &=\int_0^{+\infty} \left(1-\left(1-e^{-\lambda t}\right)^n\right) dt \\
  &=\int_0^{+\infty} e^{-\lambda t} \sum_{k=0}^{n-1}\left(1-e^{-\lambda t}\right)^k dt \\
  &= \sum_{k=0}^{n-1}\left[\frac{\left(1-e^{-\lambda t}\right)^{k+1}}{\lambda (k+1)}
  \right]_0^{+\infty} \\
  &= \sum_{k=1}^n\frac 1{k\lambda}\le\frac{\log n + 1}\lambda \, .
\end{split}
\ee
Combining \eqref{AAAA1a}, \eqref{AAAA1b} and \eqref{AAAA1c} and using the
fact that. when $|x-y| \to +\infty$, $\lambda = \log(1-p(|x-y|)\sim p(|x-y|))\sim c|x-y|^{-1}$,
we obtain the existence of $K_1>0$ such that
\begin{equation} \label{AAAA1}
  \mathcal B_n(x,y)\le \mathcal B'_n(x,y)\le K_1 \, \mathbb P(\tau_1(x)\le n)
  \left(\log n\, |x-y|\right)^\beta\, .
\end{equation}
Concerning the term $\mathcal A_n(x,y)$ in (\ref{termA}), observe that
\begin{equation} \label{AAAA2a}
  \sum_{j=1}^{N_n(x)-1}(1-M_j(x,y))\mathbf 1_{\{\tau_1(x)\le\tau_1(y)\}} =
  \sum_{j=1}^{n}\left(1-M_j(x,y)\right)\mathbf 1_{\{\tau_j(x)\le n\}}
  \mathbf 1_{\{\tau_1(x)\le\tau_1(y)\}} \, ,
\end{equation}
and recall that, by \cite[Lem.\ 2]{ks}, the process $\left(\mathcal M_m :=
\sum_{j=1}^{m} h_j, \, m\ge 1 \right)$, with
\be
  h_j :=  \left(1-M_j(x,y)\right)\mathbf 1_{\{\tau_j(x)\le n\}}
  \mathbf 1_{\{\tau_1(x)\le\tau_1(y)\}}\, ,
\ee
is a martingale with respect to $\left (\mathcal F_{\tau_{m+1}(x)} , \, m\ge 1 \right)$,
thus implying that $\E[h_j^2|\mathcal F_{\tau_j(x)}] =
\E[M_j(x,y)^2|\mathcal F_{\tau_j(x)}]$. Therefore, using Burkholder's inequality
\cite[Thm.~2.11]{HH},
there exists $C_\beta>0$ such that
\be \label{AAAA2b}
\begin{split}
  \E \! \left[\left|\mathcal M_n\right|^\beta\right] &\le C_\beta\left(
  \E \! \left[\left(\sum_{j=1}^n\mathbb E \! \left[h_j^2|\mathcal F_{\tau_j(x)}
  \right]\right)^{\beta/2}\right] +\mathbb E \! \left[\max_{j\le n}|h_j|^\beta\right]\right) \\
  &\le C_\beta \Bigg( \E \! \left[\left(\sum_{j=1}^n \E \! \left[(M_j(x,y))^2 |
  \mathcal F_{\tau_j(x)}\right] \mathbf 1_{\{\tau_j(x)\le n\}}\right)^{\beta/2} \right] \\
  &\quad + \E \! \left[\left(1+ \max_{1\le j\le n}|M_j|^\beta\right)
  \mathbf 1_{\{\tau_1(x)\le n\}}\right] \Bigg) .
\end{split}
\ee
From  \cite[Lem.\ 2]{ks}, which ensures the existence of a constant $K_2>0$
such that $\E \! \left[(M_j(x,y))^2|\mathcal F_{\tau_j(x)}\right]\le K_2|x-y|$, we
finally get
\be
\begin{split}
  \E \! \left[\left|\mathcal M_n\right|^\beta\right] &\le K_3 \left(\E \! \left[\left(
  \sum_{j=1}^n|x-y|\mathbf 1_{\{\tau_j(x)\le n\}}\right)^{\beta/2}\right] +
  \mathcal B'_n(x,y)+ \P(\tau_1(x)\le n)\right) \\
  &\le K_3 \left(|x-y|^{\beta/2} \, \E \! \left[\left(N_n(x)\right)^{\beta/2}\right] +
  \mathcal B'_n(x,y) + \P(\tau_1(x)\le n)\right) ,
\end{split}
\ee
for some $K_3>0$. Combined with \eqref{AAAA0}, \eqref{AAAA1} and
\eqref{AAAA2a}, this ends the proof of Lemma \ref{LEM00}.
\end{proof}

\begin{lemma} \label{lem-ciao}
For any $\beta\le 1$ such that $\gamma>1+1/\beta$,
$$
     \sum_{y\in\Z} \E \! \left[ \left| \sum_{r\ge 0} \mathbb P(S_1<-r) \,
     (N_n(y)-N_n(y+r))\right|^\beta \right] = O \! \left( n^{1/2 + \beta/4} \right) ,
$$
as $n \to \infty$.
\end{lemma}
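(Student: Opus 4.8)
The goal is to bound $\sum_{y\in\Z}\E\big[\,|\sum_{r\ge 0}\P(S_1<-r)\,(N_n(y)-N_n(y+r))|^\beta\,\big]$ by $O(n^{1/2+\beta/4})$. The starting point is the triangle-type inequality \eqref{tr-in-beta}, which for $\beta\le 1$ lets me pull the outer sum over $r$ inside the $\beta$-th power at the cost of replacing $|\sum_r \cdots|^\beta$ by $\sum_r \P(S_1<-r)^\beta\,|N_n(y)-N_n(y+r)|^\beta$. Thus it suffices to show
\be
  \sum_{r\ge 0}\P(S_1<-r)^\beta \sum_{y\in\Z}\E\big[\,|N_n(y)-N_n(y+r)|^\beta\,\big] = O\!\left(n^{1/2+\beta/4}\right).
\ee
The $r=0$ term vanishes, so I may restrict to $r\ge 1$. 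The plan is to control $\sum_y \E[|N_n(y)-N_n(y+r)|^\beta]$ uniformly in $r$ up to a polynomial-in-$r$ factor using Lemma \ref{LEM00}, and to beat the resulting power of $r$ using the tail assumption on $\xi_1$: since $\xi_1$ has a finite absolute moment of order $\gamma>2/\a>2$, we have $\P(S_1<-r)=\P(\xi_1<-r)=O(r^{-\gamma})$, so $\P(S_1<-r)^\beta=O(r^{-\gamma\beta})$, and the series $\sum_{r\ge1} r^{-\gamma\beta}\cdot(\text{poly in }r)$ converges precisely when $\gamma>1+1/\beta$, which is the hypothesis.

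Carrying this out, I apply Lemma \ref{LEM00} with $x=y$ and $x=y+r$ (so $|x-y|=r$) and sum over $y\in\Z$. The first group of terms contributes
\be
  K_0\, r^{\beta/2}\sum_{y\in\Z}\E\big[(N_n(y))^{\beta/2}+(N_n(y+r))^{\beta/2}\big]
  = O\!\left(r^{\beta/2}\, n^{(1+\beta/2)/2}\right) = O\!\left(r^{\beta/2}\,n^{1/2+\beta/4}\right),
\ee
where I used \eqref{sumNnbeta} with exponent $\beta/2\in(0,2]$. The second group contributes
\be
  K_0\big(1+(r\log n)^\beta\big)\sum_{y\in\Z}\big(\P(\tau_1(y)\le n)+\P(\tau_1(y+r)\le n)\big).
\ee
Since $\{\tau_1(y)\le n\}=\{N_n(y)>0\}$ up to a negligible boundary effect at step $n$, the sum $\sum_y \P(\tau_1(y)\le n)$ equals $\E[R_n]=O(\sqrt n)$ by \eqref{range}; hence this group is $O\big((1+(r\log n)^\beta)\sqrt n\big)=O\big(r^\beta (\log n)^\beta\, n^{1/2}\big)$. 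Combining, $\sum_y\E[|N_n(y)-N_n(y+r)|^\beta] = O\big(r^{\beta/2}n^{1/2+\beta/4}\big)+O\big(r^{\beta}(\log n)^\beta n^{1/2}\big)$. Plugging into the $r$-series and noting $r^{\beta/2}\le r^\beta$ for $r\ge1$, the total is bounded by a constant times
\be
  n^{1/2+\beta/4}\Big(\sum_{r\ge1} r^{-\gamma\beta}\,r^{\beta}\Big) + (\log n)^\beta n^{1/2}\Big(\sum_{r\ge1} r^{-\gamma\beta}\,r^{\beta}\Big).
\ee
Both series are finite because $\gamma\beta-\beta>1 \iff \gamma>1+1/\beta$; and since $1/2<1/2+\beta/4$ and the logarithmic factor is absorbed, the whole expression is $O(n^{1/2+\beta/4})$, as claimed.

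The main obstacle is purely bookkeeping: making sure the power of $r$ produced by Lemma \ref{LEM00} (namely $r^{\beta/2}$ from the first term and $r^\beta$ from the second) is exactly matched by the decay $r^{-\gamma\beta}$ from the tail of $\xi_1$ so that the series converges, which is why the hypothesis is stated as $\gamma>1+1/\beta$ rather than something weaker — the worst term is the one with the $r^\beta$ factor. A minor technical point to check is the identification of $\sum_y\P(\tau_1(y)\le n)$ with $\E[R_n]+O(1)$: the event $\{N_n(y)>0\}$ means $S_k=y$ for some $k\le n-1$, whereas $\{\tau_1(y)\le n\}$ allows $k\le n$, a discrepancy contributing at most one extra site, which is harmless. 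Everything else is a direct substitution of the already-established estimates \eqref{range}, \eqref{sumNnbeta} and Lemma \ref{LEM00}.
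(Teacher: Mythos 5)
Your proposal is correct and follows essentially the same route as the paper's own proof: apply the $\beta$-triangle inequality \eqref{tr-in-beta} to pull the $r$-sum outside, invoke Lemma \ref{LEM00}, and then control the two resulting groups of terms via \eqref{sumNnbeta} (with exponent $\beta/2$), \eqref{range}, and the Markov-inequality tail bound $\P(S_1<-r)^\beta=O(r^{-\gamma\beta})$, using $\gamma>1+1/\beta$ to make the $r$-series converge. The only cosmetic differences are that the paper bundles the two powers of $r$ into a single factor $(r^\beta+1)$ before summing, and does not pause on the negligible offset between $\{\tau_1(y)\le n\}$ and $\{N_n(y)>0\}$, which you correctly observe is harmless.
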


\begin{proof}
Using (\ref{tr-in-beta}) we have
\be \label{AAA2}
\begin{split}
  & \sum_{y\in\Z} \E \!\left[ \left| \sum_{r\ge 0}\mathbb P(S_1<-r) \,
  (N_n(y)-N_n(y+r))\right|^{\beta}\right] \\
  &\qquad \le \sum_{y\in\Z} \sum_{r\ge 0} (\mathbb P(S_1<-r))^{\beta} \,
  \mathbb E \! \left[\left| N_n(y)-N_n(y+r) \right|^{\beta} \right]\, .
\end{split}
\ee
By Lemma \ref{LEM00}, this quantity is dominated by
\be
\begin{split}
  & 2K_0 \sum_{y\in\Z} \sum_{r\ge 0} (\mathbb P(S_1<-r))^{\beta}
  \left(r^{\beta/2} \, \E\! \left[\left| N_n(y)\right|^{\beta/2} \right]+
  (r^\beta(\log n)^\beta+1)\P(\t_1(y)\leq n)\right) \\
  &\qquad \le 2K_0 \sum_{r\ge 0} (\mathbb P(S_1<-r))^{\beta} (r^{\beta}+1)  \left(
  \sum_{y\in\Z} \E \! \left[\left| N_n(y)\right|^{\beta/2} \right]+
  (\log n)^\beta \, \E[R_n]\right)\\
  &\qquad = O\! \left(n^{1/2 + \beta/4}\right) ,
\end{split}
\ee
where the last estimate follows from \eqref{range} and \eqref{sumNnbeta},
with $\beta$ replaced by $\beta/2$, and the following application of
the Markov inequality:
\be \label{markov-beta}
  \sum_{r\ge 0}(\mathbb P(S_1<-r))^\beta (r^\beta+1) \le ( \E[|S_1|^\gamma]
  )^\beta \sum_{r\ge 0} r^{-\gamma\beta} (r^\beta+1) \,,
\ee
because $\gamma \beta-\beta> 1$ by hypothesis.
\end{proof}

Later we will need certain inequalities for the local time on the bonds
$\cN_n$ that are analogous to the ones already seen for the local
time on the sites $N_n$. For this we will exploit the following lemma,
which estimates the $\a$-norm distance between the two types of
local times.

\begin{lemma}\label{lemma-Nn}
$$
  \sum_{y\in\Z} \E [ |\mathcal N_n(y) - \mu_\xi N_n(y)|^\a ] =
 O \! \left( n^{1/2 + \a/4} \right) .
$$
\end{lemma}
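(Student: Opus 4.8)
The strategy is to express the difference $\mathcal N_n(y) - \mu_\xi N_n(y)$ in terms of increments of the site local time, so that Lemma \ref{lem-ciao} can be applied directly with $\beta = \alpha$. First I would rewrite the bond local time $\mathcal N_n(y)$ by decomposing each jump of $S$ according to which bonds it crosses. Since $\mathcal N_n(y)$ counts the jumps $k$ with $[y-1,y]\subseteq[S_{k-1},S_k]$, summing over the starting site one gets, after a standard manipulation, a representation of the form $\mathcal N_n(y) = \sum_{z} N_n(z)\, g(y,z)$ where $g$ encodes the probability that a single increment starting from $z$ crosses the bond $[y-1,y]$; more precisely, conditioning on the site visited and using that increments are i.i.d., the contribution of the visits to site $z$ to the crossings of bond $y$ has conditional mean $\mathbb P(S_1 \ge y - z)$ if $z < y$ and $\mathbb P(S_1 \le y-1-z)$ if $z \ge y$. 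The point is that $\sum_z$ of these crossing-probabilities over all bonds equals $\mathbb E(|\xi_1|) = \mu_\xi$ (each unit-length increment crosses exactly $|\xi_1|$ bonds), which is exactly why $\mu_\xi N_n(y)$ is the right centering.

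Carrying out this bookkeeping, the difference $\mathcal N_n(y) - \mu_\xi N_n(y)$ splits into two pieces. The ``deterministic-mean'' piece is $\sum_z N_n(z)\big(\text{crossing prob.}\big) - \mu_\xi N_n(y)$, and after reindexing (writing $z = y+r$ or $z = y-1-r$ and using $\mu_\xi = \sum_{r\ge 0}\mathbb P(S_1 < -r) + \sum_{r\ge 0}\mathbb P(S_1 > r)$) this collapses to a sum of terms of the form $\sum_{r\ge 0}\mathbb P(S_1 < -r)\,(N_n(y) - N_n(y+r))$ and its mirror image with $S_1 > r$ — precisely the quantities controlled by Lemma \ref{lem-ciao}, which gives an $\ell^\alpha$-sum of order $n^{1/2+\alpha/4}$ (the hypothesis $\gamma > 2/\alpha > 1 + 1/\alpha$ is exactly what is needed to apply that lemma with $\beta = \alpha$). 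The second piece is the ``fluctuation'' piece: for each site $z$, the actual number of crossings of bond $y$ during the visits to $z$ minus its conditional mean. Because distinct visits contribute i.i.d.\ centered Bernoulli-type terms given the skeleton, one bounds its $\alpha$-moment by $(\,\mathbb E[(N_n(z))^{\alpha/2}]\,)$ times the crossing probability (a Marcinkiewicz–Zygmund / von Bahr–Esseen estimate for sums of i.i.d.\ centered variables at exponent $\alpha \le 1$, or simply the subadditivity $|a+b|^\alpha \le |a|^\alpha+|b|^\alpha$ at the crude level). Summing over $y$ and $z$ and using \eqref{sumNnbeta} with $\beta = \alpha/2$ together with $\sum_r \mathbb P(S_1 < -r)^\alpha r^{\alpha} < \infty$ (Markov, as in \eqref{markov-beta}) again yields $O(n^{1/2+\alpha/4})$.

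The main obstacle is the combinatorial bookkeeping in the first step: making the identity $\mathcal N_n(y) = \sum_z N_n(z)\,(\text{crossing count})$ precise, handling the boundary bond where the increment starts and ends on opposite sides of $y$ carefully, and verifying that the telescoping of the crossing probabilities produces exactly the increments $N_n(y) - N_n(y+r)$ that Lemma \ref{lem-ciao} is built to handle — rather than some more complicated weighted combination. Once that algebraic reduction is in place, everything else is a routine application of \eqref{range}, \eqref{sumNnbeta} and Lemmas \ref{LEM00}–\ref{lem-ciao}. A secondary technical point is ensuring the fluctuation term is genuinely a sum of conditionally independent increments across the successive visits to a given site, which follows from the strong Markov property applied at the times $\tau_j(z)$, exactly as the variables $M_j(x,y)$ were handled in the proof of Lemma \ref{LEM00}.
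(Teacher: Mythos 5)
Your plan follows the paper's proof very closely: decompose $\mathcal N_n(y)$ into directional crossings, compensate by the weighted sum $\sum_{r\ge 0}N_n(y+r)\,\P(S_1<-r)$ (plus the mirror), control the resulting martingale, and finish with Lemma~\ref{lem-ciao}. That architecture is exactly right. The gap is in the quantitative heart of the argument, namely the bound on what you call the ``fluctuation'' piece. You state that its $\alpha$-moment is $\E[(N_n(z))^{\alpha/2}]$ times the crossing probability $p$, and that this follows either from a Marcinkiewicz--Zygmund/von~Bahr--Esseen estimate or from crude subadditivity. Neither route gives that bound. Subadditivity at exponent $\alpha$ gives $\E|\sum_{j\le N}(Y_j-p)|^\alpha\le N\,\E|Y_1-p|^\alpha\approx N p^\alpha$, which after summing over $y$ and $r$ yields $\sum_r p_r^\alpha\cdot O(n)$, i.e.\ $O(n)$ — strictly worse than the target $O(n^{1/2+\alpha/4})$ since $1/2+\alpha/4<1$. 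The correct exploitation of cancellation (Jensen on the conditional variance, or Burkholder as in the paper's \eqref{Burkholder}--\eqref{burkho1}) gives $(Np)^{\alpha/2}\approx p^{\alpha/2}\E[(N_n(z))^{\alpha/2}]$, with $p^{\alpha/2}$, not $p$. This distinction matters: after summing, the factor becomes $\sum_r(\P(S_1<-r))^{\alpha/2}$, and its finiteness is \emph{exactly} what forces $\gamma>2/\alpha$. You instead attribute $\gamma>2/\alpha$ to Lemma~\ref{lem-ciao}, which only needs the weaker $\gamma>1+1/\alpha$, and you cite the Lemma-\ref{lem-ciao}-type condition $\sum_r\P(S_1<-r)^\alpha r^\alpha<\infty$ (with an extraneous $r^\alpha$) for the fluctuation piece, where it does not belong.

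Two further issues, both lower-order but real. First, $N_n(z)$ is not a stopping time and is not independent of the crossing indicators, so von~Bahr--Esseen at fixed $N$ cannot be invoked directly; one really needs the martingale formulation (the paper sets up the martingale $\mathcal N_n^-(y)-\sum_r N_n(y+r)\P(S_1<-r)$ with increments $d_k^-(y)$ and applies Burkholder). Second, Burkholder produces a second term $\E[\max_k|d_k^-(y)|^\alpha]$ whose sum over $y$ is \emph{not} trivially finite — naively bounding $|d_k^-(y)|\le 1$ and summing over $y$ diverges. The paper splits this term into $D_y^-$ and $E_y^-$ and shows $\sum_y B_y^-=O(\sqrt n)$ via the range/Doob bound and a tail estimate on $F_{S_1}$; your plan does not address this at all. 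So: right skeleton, but the fluctuation estimate as stated is both incorrect in form and misassigns the role of the moment hypothesis, and the Burkholder max term is missing.
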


\begin{proof}
We write
\be
  \mathcal N_n(y) = \mathcal N_n^-(y)+ \mathcal N_n^+(y) \,,
\ee
where
\begin{align}
  \mathcal N_n^-(y) &:=\#\{k=0,\ldots,n-1 \,:\,S_k\ge y,\ S_{k+1}\le y- 1\} \,; \\
  \mathcal N_n^+(y) &:=\#\{k=0,\ldots,n-1 \,:\, S_{k+1}\ge y,\ S_{k}\le y- 1\}
\end{align}
denote the number of crossings of $[y-1,y]$, respectively, from right to
left and from left to right.

Observe that, for fixed $y$, the process
\be
  \left( \mathcal N^-_{n}(y) - \sum_{r\ge 0} N_n(y+r) \, \mathbb P(S_1<-r),
  \, n\in\Z^+ \right)
\ee
is a martingale with $k^\mathrm{th}$ increment given by
\be \label{incr}
  d^-_k(y) := \sum_{r\ge 0} \left( \mathbf 1_{\{S_{k}\le y-1\}} -
  \mathbb P(S_1<-r) \right) \mathbf 1_{\{S_{k-1}=y+r\}} \,.
\ee
By Burkholder's inequality \cite[Thm.~2.11]{HH}, there exists $
C_\alpha>0$ such that
\be \label{Burkholder}
  \E \! \left[ \left| \mathcal N_n^-(y) - \sum_{r\ge 0}N_n(y+r) \,
  \mathbb P(S_1<-r)\right|^\alpha \right] \le C_\alpha \!
  \left(A^-_y+ B^-_y\right) ,
\end{equation}
where
\begin{align}
  A^-_y &:= \E \! \left[ \left( \sum_{k=1}^{n} \E \! \left[ (d^-_k(y))^2
  \,\big|\, S_{k-1} \right] \right)^{\alpha/2} \right] ; \\
  B^-_y &:= \E \!\left[ \max_{k\le n}|d^-_k(y)|^\alpha \right] .
  \label{secondterm}
\end{align}

Let us estimate these terms. Using (\ref{tr-in-beta}) with $\b = \a/2<1$,
we get for the first term:
\be \label{burkho1}
\begin{split}
  \sum_{y\in\Z}A^-_y &= \sum_{y\in\Z} \E \! \left[ \left( \sum_{k=0}^{n-1}
  \sum_{r\ge 0} \P(S_1<-r) \, (1-\P(S_1<-r)) \, \mathbf 1_{\{S_k=y+r\}}
  \right)^{\a/2}\right] \\
  &\le\sum_{y\in\Z} \E \! \left[ \left(\sum_{k=0}^{n-1} \sum_{r\ge 0}
  \P(S_1<-r) \, \mathbf 1_{\{S_k= y+r\}}\right)^{\a/2} \right] \\
  &\le \sum_{y\in\Z} \E \!\left[ \left (\sum_{r\ge 0}\P(S_1<-r) \, N_n(y+r)
  \right)^{\a/2}\right] \\
  &\le \sum_{y\in\Z} \E \! \left[  \sum_{r\ge 0} \left(\P(S_1<-r) \right)^{\a/2}
  (N_n(y+r))^{\a/2} \right] \\
  &\le ( \E[|S_1|^\g] )^{\a/2}
  \sum_{r\ge 0} r^{-\a\g/2} \sum_{y\in\Z} \E \! \left[ (N_n(y))^{\a/2} \right] \\
  &=  O \!\left( n^{1/2 + \a/4} \right) ,
\end{split}
\ee
where in the last estimates we have used the Markov inequality for the
variable $|S_1|^\g$, with $\gamma>2/\a$, and estimate
(\ref{sumNnbeta}) with $\beta=\alpha/2$.

As for the second term, in view of (\ref{secondterm}) and (\ref{incr}),
we write
\be \label{burkho3}
  \sum_{y\in\Z} B^-_y \le \sum_{y\in\Z} \E \! \left[ D^-_y + E^-_y \right] ,
\ee
where
\begin{align}
  D^-_y &:= \max_{k \le n} \left| \sum_{r\ge 0}
  \mathbf 1_{\{ S_k \le y-1, \, S_{k-1} = y+r \}} \right|^\a
  = \max_{k \le n} \, \mathbf 1_{\{ S_k \le y-1<y\le S_{k-1}  \}} \,; \\
  E^-_y &:= \max_{k \le n} \left| \sum_{r\ge 0} \P( S_1 < -r) \,
  \mathbf 1_{\{ S_{k-1} = y+r \}} \right|^\a  \label{ssterm}  \\
  &= \max_{k \le n} \, \left| F_{S_1}(y-S_{k-1}) \right|^\alpha \,
  \mathbf 1_{\{ S_{k-1} \ge y\}} \, .
  \nonumber
\end{align}
Here $F_{S_1}$ is the distribution \fn\ of $S_1 = \xi_1$. The
expectation of the first of the two terms above is easily estimated
as follows, upon a relabeling of the index $k$:
\be \label{burkho2}
\begin{split}
  \sum_{y\in\Z} \E \! \left[D^-_y \right] &= \sum_{y\in\Z} \E \!\left[
  \max_{k\le n-1} \mathbf 1_{\{S_{k+1}<y\le S_k\}} \right] \\
  &\le \E \!\left[ \# \{y\in\Z \,:\, \exists k\le n-1 \mbox{ such that }
  S_{k+1}<y\le S_k \} \right] \\
  &\le \E \!\left[ \max_{k\le n-1} S_k - \min_{\ell\le n-1} S_\ell \right]
  = O \!\left(\sqrt n \right) ,
\end{split}
\ee
having used Doob's maximal inequality and the fact that $\E[S_n^2] =
v_\xi n$. As concerns the expectation of (\ref{ssterm}),
\be
\begin{split}
  \sum_{y\in\Z} \E \! \left[E^-_y \right] &\le \sum_{y\in\Z} \E \!\left[ \left(
  F_{S_1} \! \left( y - \min_{{S_{k-1} \ge y} \atop {k \le n}} S_{k-1} \right)
  \right)^\a \mathbf 1_{\{ \exists k \le n \,:\, S_{k-1} \ge y \}} \right] \\
  &\le \E \!\left[ \sum_{y = \min_{k \le n-1} S_k}^{\max_{k \le n-1} S_k}
  \!\!\!\!\ 1 \right] + \E \!\left[ \sum_{y < \min_{k \le n-1} S_k}  \!\! \left(
  F_{S_1} \! \left( y - \min_{{S_k \ge y} \atop {k \le n-1}} S_k \right)
  \right)^\a \right] \\
  &\le \E \!\left[ \max_{k\le n-1} S_k - \min_{k \le n-1} S_k + 1\right]
  + \sum_{\xi < 0} \left( F_{S_1} (\xi) \right)^\a ,
\end{split}
\ee
where again we have redefined the index $k$ in the second inequality.
Now, in the last bound above, the first term is $O(\sqrt{n})$, as in
(\ref{burkho2}), and the second term is $O(1)$, because
$(F_{S_1}(\xi))^\alpha = O( |\xi|^{-\alpha\gamma} )$ by the Markov
inequality and the fact that $\alpha\gamma>1$ by hypothesis; cf.\
(\ref{markov-beta}).

Together with (\ref{burkho3}) and (\ref{burkho2}) we conclude that
\be \label{burkho4}
  \sum_{y\in\Z} B^-_y = O \!\left( \sqrt{n} \right) ,
\ee
whence, in light of (\ref{Burkholder}) and (\ref{burkho1}),
\be
  \sum_{y\in\Z}  \E \! \left[ \left| \mathcal N_n^-(y) - \sum_{r\ge 0}N_n(y+r) \,
  \mathbb P(S_1<-r)\right|^\alpha \right] = O \!\left( n^{1/2 + \a/4}
  \right) .
\ee
Applying (\ref{tr-in-beta}) and Lemma \ref{lem-ciao}, both with $\b = \a$
(since $\gamma>2/\a>1+1/\a$), we obtain
\be
  \sum_{y\in\Z} \E \! \left[ \left| \mathcal N_n^-(y) - N_n(y) \sum_{r\ge 0}
  \mathbb P(S_1<-r)\right|^\alpha \right]
  = O \!\left( n^{1/2 + \a/4} \right) .
\ee

In the same way one can prove that
\be
  \sum_{y\in\Z}  \E \! \left[ \left| \mathcal N_n^+(y) - N_n(y) \sum_{r\ge 0}
  \mathbb P(S_1>r)\right|^\alpha \right]
  = O \!\left( n^{1/2 + \a/4} \right) .
\ee
Lemma \ref{lemma-Nn} then follows from the two previous estimates
and (\ref{tr-in-beta}) with $\b=\a$, because $\mu_\xi = \E[ |S_1| ]
= \sum_{r\ge 0} \P(|S_1|>r)$.
\end{proof}

\noindent
\textbf{Observation.}\ Lemma \ref{lemma-Nn} is the only part of the paper
where the assumption $\gamma>2/\a$ is used.

\subsection{Finite-dimensional distributions}
To prove the convergence of the finite-dimensional joint distribution
$( \bar \o^{(q)},\bar S^{(q)}, \bar T^{(q)} )$, as $q \to \infty$, we use
the method of the characteristic \fn, as in \cite{ks}.
\bigskip

\noindent
\textbf{Notation.}\
In what follows, the asymptotic equivalence $a(\theta)\sim b(\theta)$, as
$\theta \to \theta_0$, means that $\lim_{\theta \to \theta_0} a(\theta)/b(\theta)
=1$. In case $b(\theta_j)=0$, for $\theta_j \to \theta_0$, the equivalence means
that $a(\theta_j)=0$, fo all $j$. A similar notation will be used w.r.t.\ to other
limits, such as $q \to \infty$.
\bigskip

The assumption on the distribution of the gaps $\z_j$, see beginning
of Section \ref{sec-setup}, implies that the following limit exists
\begin{equation} \label{c0}
  c_0 := \lim_{z\to +\infty} z^\a \, \mathbb P(\z_1 \ge z)>0 \,.
\end{equation}
Equivalently, denoting by $\phi_{\mathcal X}$ the characteristic
function of a given random variable $\mathcal X$, the following
relation holds for $\theta\to 0$:
\begin{equation} \label{FonCar}
  1-\phi_{\z_1}(\theta) \sim -\log(\phi_{Z_1}(\theta)) =
  c_1 |\theta|^\a \left(1-i\tan\frac{\pi\a} 2 \sgn(\theta) \right) ,
\end{equation}
where $Z_1$ is the $\a$-stable variable whose basin of attraction
contains $\z_1$, cf.~Section \ref{sec-setup}, and $c_1 := \Gamma
(1-\alpha) c_0 \cos(\a\pi/2)$ \cite[\S 2.2]{IL}. Hence, there exists a
continuous
increasing function $\epsilon_0: \R_0^+ \into \R_0^+$ such that
$\epsilon_0(0)=0$ and, for all $\theta \in \R$,
\begin{equation} \label{epsilon0}
  \left| \phi_{\z_1}(\theta) - \phi_{Z_1}(\theta) \right| \le |\theta|^\a
  \epsilon_0(|\theta|) \,.
\end{equation}

\medskip

\begin{proof}[Proof of the convergence of the finite-dimensional
distributions]
For a given integer $m\ge 1$, we take $3m$ real numbers $\kappa_1,
\ldots, \kappa_m, \mu_1, \ldots, \mu_m, \theta_1, \ldots, \theta_m$,
and  $3m$ real numbers $t_1, \ldots, t_m, s_1, \ldots, s_m, v_1,
\ldots, v_m$ satisfying $0<t_1<\cdots<t_m$,  $0<v_1<\ldots<v_m$
and $s_1<\cdots<s_m$. Let us denote by $\varphi^{(q)}$ the
characteristic function of
\be
  \left( \bar\o^{(q)}(s_1), \ldots, \bar\o^{(q)}(s_m), \bar S^{(q)}(v_1), \ldots,
  \bar S^{(q)}(v_m), \bar T^{(q)}(t_1), \ldots, \bar T^{(q)}(t_m) \right)
\ee
evaluated in $(\kappa_1, \ldots, \kappa_m, \mu_1, \ldots, \mu_m, \theta_1,
\ldots, \theta_m)$. Conditioning with respect to $S$, we have
\be \label{phi1}
  \varphi^{(q)} = \E \! \left[ \E \! \left[ \left. e^{ i\sum_{j=1}^m (\kappa_j \,
  \bar\o^{(q)}(s_j) \,+\, \theta_j \, \bar T^{(q)}(t_j) ) } \right| S\, \right]
  e^{i \sum_ {j=1}^m \mu_j \, \bar S^{(q)}(v_j) } \right] .
\ee
Setting $\tilde\kappa_j:=\sgn(s_j)\kappa_j$, the inner expectation can be
rewritten as
\be \label{phi2}
\begin{split}
  &\E \! \left[ \left. e^{ i\sum_{j=1}^m (\kappa_j \, \bar\o^{(q)}(s_j) \,+\,
  \theta_j \, \bar T^{(q)}(t_j) ) } \right| S\, \right] \\
  &\qquad = \E \! \left[ \left. \exp \! \left( i\sum_{y\in\Z} \sum_{j=1}^m
  \left( \theta_j \, q^{-(1+\a)/2\a} \mathcal N_{\lfloor t_j q \rfloor}(y) +
  \tilde\kappa_j \, q^{-1/2\a} \, \mathbf 1_{I_{s_j \sqrt{q}}} (y) \right) \z_y \right)
  \right| S \right] \\
  &\qquad = \E \! \left[ \left. \prod_{y\in \Z} \phi_{\z_1} \! \left( \sum_{j=1}^m
  \left( \theta_j \, q^{-(1+\a)/2\a} \mathcal N_{\lfloor t_j q \rfloor}(y) +
  \tilde\kappa_j \, q^{-1/2\a} \, \mathbf 1_{I_{s_j \sqrt{q}}} (y) \right) \right)
  \right| S \right] ,
\end{split}
\ee
with
\be
  I_s := \left\{
    \begin{array}{cl}
      (0,s], & s \ge 0 \,; \\
      (s,0], & s < 0 \,.
    \end{array}
  \right.
\ee
Altogether we get
\be \label{phi3}
  \varphi^{(q)} \! = \E \! \left[ \prod_{y\in \Z} \phi_{\z_1} \!\! \left( \sum_{j=1}^m
  \! \left( \theta_j \, q^{-(1+\a)/2\a} \mathcal N_{\lfloor t_j q \rfloor}(y) +
  \tilde\kappa_j \, q^{-1/2\a} \, \mathbf 1_{I_{s_j \sqrt{q}}} (y) \right) \right)
  e^{i \sum_ {j=1}^m \mu_j \, \bar S^{(q)}(v_j) } \right] \!.
\ee

\bigskip
\noindent
\textbf{Convention.}\ In the remainder of this proof, for $a \in \R$, we
shall use the notation
\be
  |a|^\alpha_\sgn := |a|^\a \sgn(a) \,.
\ee
Also, we shall introduce pairs of quantities, say $A, A_\sgn$, which
are defined similarly except that a certain expression $|\cdot|^\alpha$
in the first definition is replaced by the corresponding expression
$|\cdot|^\alpha_\sgn$ in the second one. If $A, A_\sgn$ and $B, B_\sgn$
are two such pairs, we shall use the imprecise notation
\be \label{for-sigma}
  A_\s = B_\s, \quad \mbox{ for } \s \in \{ \ \ , \sgn\}
\ee
to mean that $A=B$ and $A_\sgn = B_\sgn$.
\bigskip

If we set
\be \label{def-vq}
  V^{(q)}(y) := \sum_{j=1}^m \left( \theta_j \, q^{-(1+\a)/2\a}
  \mathcal N_{\lfloor t_j q \rfloor}(y) + \tilde\kappa_j \, q^{-1/2\a} \,
  \mathbf 1_{I_{s_j \sqrt{q}}} (y) \right) ,
\ee
the following is an identity by
(\ref{FonCar}):
\be
\begin{split}
  & \prod_{y\in \Z} \phi_{Z_1} \! \left( \sum_{j=1}^m \left( \theta_j
  \, q^{-(1+\a)/2\a} \mathcal N_{\lfloor t_j q \rfloor}(y) +  \tilde\kappa_j \,
  q^{-1/2\a} \, \mathbf 1_{I_{s_j \sqrt{q}}} (y) \right) \right) \\
  &\qquad = \exp \! \left(-c_1 \sum_{y\in \Z} \left( |V^{(q)}(y)|^\a -
  i |V^{(q)}(y)|_{\sgn}^\a \, \tan\frac{\pi\a}2 \right) \right) .
\end{split}
\ee
Therefore, in view of (\ref{epsilon0}), we obtain
\be
\begin{split}
  &\left| \varphi^{(q)} - \E \! \left[ \exp \!\left(-c_1\sum_{y\in \Z} \left(
  |V^{(q)}(y)|^\a - i |V^{(q)}(y)|_{\sgn}^\a \, \tan\frac{\pi\a}2 \right) + i
  \sum_ {j=1}^m \mu_j \, \bar S^{(q)}(v_j) \right) \right] \right| \\
  &\qquad \le \E \! \left[ \sum_{y\in\Z} |V^{(q)}(y)|^\a \, \epsilon_0 \!
  \left( |V^{(q)}(y)| \right) \right] \\
  &\qquad\le \E \!\left[ \sum_{y\in\Z} |V^{(q)}(y)|^\a \right] \epsilon_0 \!
  \left( \sum_{j=1}^m \left( |\theta_j| \, t_m \, q^{(\a-1)/2\a} + |\kappa_j|
  \, q^{-1/2\a} \right) \right) ,
\end{split}
\ee
where in the last inequality we have used that $\mathcal N_{\lfloor t_j q \rfloor}
\le t_m q$. Now, from the definition of $V^{(q)}$, using (\ref{tr-in-beta})
with $\b=\a<1$, we see that the last term in the above inequality is bounded
above by
\be
   \E \! \left [\sum_{j=1}^m \left( |\theta_j|^\a \, q^{-(\a+1)/2} \sum_{y\in\Z}
   (\mathcal N_{\lfloor t_j q \rfloor}(y))^\a + |\tilde\kappa_j|^\a q^{-1/2} \,
   |s_j|^\a \, q^{\a/2} \right) \right] \epsilon_q \,,
\ee
where $\epsilon_q := \epsilon_0 ( \sum_{j=1}^m ( |\theta_j| t_j \,
q^{(\a-1)/2\a} + |\kappa_j|  q^{-1/2\a} ))$. Notice that $\epsilon_q \to 0$,
as $q \to \infty$. Now, by Lemma \ref{lemma-Nn} and
(\ref{sumNnbeta}),
\be
  \sum_{y\in\Z} \E \! \left[ (\mathcal N_n(y))^\a \right] = \sum_{y\in\Z}
  \mu_\xi^\a \,\E \!\left[ (N_n(y))^\a \right] + O \!\left( n^{1/2+\a/4} \right)
  = O \!\left( n^{1/2+\a/4} \right) .
\ee
We conclude that, for $q \to \infty$,
\be \label{marco1}
  \varphi^{(q)} \sim \E \!\left[ \exp \!\left(-c_1 \sum_{y\in \Z} \left(
  |V^{(q)}(y)|^\a - i |V^{(q)}(y)|_{\sgn}^\a \, \tan\frac{\pi\a}2 \right) + i
  \sum_{j=1}^m \mu_j \, \bar S^{(q)}(v_j) \right) \right] .
\ee

In order to take the limit of the above r.h.s.\ we define
\be
 \widetilde V^{(q)}(y):=\sum_{j=1}^m \left( \theta_j \,
  q^{-(1+\a)/2\a} \, \mu_\xi \, N_{\lfloor t_j q \rfloor}(y) + \tilde\kappa_j \,
  q^{-1/2\a} \, \mathbf 1_{I_{s_j \sqrt{q}}} (y) \right)\, .
\ee
Applying
\be \label{tr-in-sgn}
  \left| |a|_\sigma^\a - |b|_\sigma^\a \right| \le 2^{1-\a} |a-b|^\a \,,
\ee
which holds true for both $\s \in \{ \ \ , \sgn\}$, since $\alpha\le 1$,
and using \eqref{def-vq}, we obtain
\be
\begin{split}
  &\left|\sum_{y\in\mathbb Z}\left|V^{(q)}(y)\right|_\sigma^\alpha
  -\sum_{y\in\mathbb Z}\left|\widetilde V^{(q)}(y)\right|_\sigma^\alpha\right| \\
  &\qquad \le 2^{1-\a} \sum_{y\in\mathbb Z}\left|V^{(q)}(y)-\widetilde V^{(q)}(y)
  \right|^\alpha\\
  &\qquad \le 2^{1-\a}\sum_{y\in\mathbb Z}\left|\sum_{j=1}^m \theta_j \,
  q^{-(1+\a)/2\a} \left(\mathcal N_{\lfloor t_jq\rfloor}(y)-\mu_\xi \,
  N_{\lfloor t_j q \rfloor}(y)\right)\right|^\a  \\
  &\qquad \le 2^{1-\a} \sum_{y\in\mathbb Z}\sum_{j=1}^m \left|\theta_j\right|^\alpha
  q^{-(1+\a)/2} \left| \mathcal N_{\lfloor t_jq\rfloor}(y)-\mu_\xi \,
  N_{\lfloor t_j q \rfloor}(y)\right|^\a ,
\end{split}
\ee
where in the last step we have again used (\ref{tr-in-beta}) with $\b=\a$. Therefore,
by Lemma \ref{lemma-Nn},
\be \label{banana}
  \mathbb E\left[\left|\sum_{y\in\mathbb Z}\left|V^{(q)}(y)\right|_\sigma^\alpha
  -\sum_{y\in\mathbb Z}\left|\widetilde V^{(q)}(y)\right|_\sigma^\alpha\right|\right]
  = O\! \left( q^{-\a/4} \right)  .
\ee
Setting
\be
  G_\sigma^{(q)} := \sum_{y\in\Z} \left| \widetilde V^{(q)}(y) \right|_\sigma^\a
\ee
and combining (\ref{banana}) with \eqref{marco1}, we see, by means of the
Lebesgue's Dominated Convergence Theorem, that, for $q \to \infty$,
\be \label{Formulaphi}
  \varphi^{(q)} \sim \E \!\left[ \exp \! \left(-c_1 \left(G^{(q)} - i\, G_\sgn^{(q)}
  \tan \frac{\pi\a}2 \right) + i \sum_{j=1}^m \mu_j \, \bar S^{(q)}(v_j) \right)
  \right] .
\ee

We then define
\be
  G_\sigma := \int_\R \left| \sum_{j=1}^m \left( \theta_j \, \mu_\xi \,
  L_{t_j}(y) + \tilde\kappa_j \, \mathbf 1_{I_{s_j}}(y) \right)
  \right|_{\sigma}^\a dy \,.
\ee
The following lemma will be proved below.

\begin{lemma} \label{LEM1}
For fixed values of the parameters  $\{ \kappa_i, \mu_i, \theta_i, t_i,
s_i, v_i \}_{i=1}^m$, as described earlier, the random vector
$( G^{(q)}, G^{(q)}_\sgn, \sum_{j=1}^m \mu_j\, \bar S^{(q)}(v_j) )$
converges in distribution, as $q \to \infty$, to the vector
$( G, G_\sgn, \sum_{j=1}^m \mu_j \, B(v_j) )$.
\end{lemma}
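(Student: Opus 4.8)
The plan is to exhibit $G^{(q)}_\sigma$ as an integral of a step function built from the rescaled local time $\widehat L^{(q)}_{t}(y):=q^{-1/2}N_{\lfloor tq\rfloor}(y)$ of the underlying walk, and then to identify the limit via the joint invariance principle for a random walk and its local time. Since $q^{-(1+\a)/2\a}=q^{-1/2\a}\,q^{-1/2}$ and $|\lambda a|^\a_\sigma=\lambda^\a|a|^\a_\sigma$ for $\lambda>0$, factoring $q^{-1/2\a}$ out of $\widetilde V^{(q)}(y)$ and using $q^{-1/2}\sum_{y\in\Z}h(y)=\int_\R h(\lfloor x\sqrt q\rfloor)\,dx$, one gets
\[
  G^{(q)}_\sigma=\int_\R\Big|\sum_{j=1}^m\Big(\theta_j\,\mu_\xi\,\widehat L^{(q)}_{t_j}(\lfloor x\sqrt q\rfloor)+\tilde\kappa_j\,\mathbf 1_{I_{s_j\sqrt q}}(\lfloor x\sqrt q\rfloor)\Big)\Big|^\a_\sigma\,dx .
\]
The announced limit $G_\sigma$ is exactly the same integral with $\widehat L^{(q)}_{t_j}(\lfloor x\sqrt q\rfloor)$ replaced by $L_{t_j}(x)$ and $\mathbf 1_{I_{s_j\sqrt q}}(\lfloor x\sqrt q\rfloor)$ by $\mathbf 1_{I_{s_j}}(x)$.

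Second, I would invoke the classical joint functional limit theorem for a centred, aperiodic, finite-variance random walk and its local time field (as used in \cite{ks}; see also the references therein): the pair $\big(\bar S^{(q)}(\cdot),\,(\widehat L^{(q)}_{t}(\lfloor\,\cdot\,\sqrt q\rfloor))_{t\ge0}\big)$ converges in distribution, in the topology of uniform convergence on compact sets, to $\big(B(\cdot),(L_t(\cdot))_{t\ge0}\big)$, with $B$ a Brownian motion of variance $v_\xi t$ and $L$ its local time; recall that $x\mapsto L_{t}(x)$ is a.s.\ continuous and supported in the (a.s.\ bounded) range of $B$ on $[0,t]$. By the Skorokhod representation theorem we may, exactly as in the Assumption preceding Lemma \ref{LEM0}, place ourselves on a probability space on which this convergence and $\bar S^{(q)}(v_j)\to B(v_j)$ hold almost surely; the particular space is irrelevant.

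Third, on that space I would prove $G^{(q)}_\sigma\to G_\sigma$ a.s.\ by dominated convergence in the integral above. The uniform convergence $\widehat L^{(q)}_{t_j}(\lfloor\,\cdot\,\sqrt q\rfloor)\to L_{t_j}(\cdot)$ on compacts, together with the convergence of the range of $\bar S^{(q)}$ on $[0,t_m]$ to that of $B$, produces a random, a.s.\ finite $M$ such that for all large $q$ the integrand vanishes off $[-M,M]$ and $\sup_{q\ \mathrm{large},\,|x|\le M}\widehat L^{(q)}_{t_j}(\lfloor x\sqrt q\rfloor)<\infty$; moreover $\mathbf 1_{I_{s_j\sqrt q}}(\lfloor x\sqrt q\rfloor)\to\mathbf 1_{I_{s_j}}(x)$ for every $x\notin\{0,s_j\}$ and is bounded by $1$. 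Since $a\mapsto|a|^\a_\sigma$ is continuous and bounded on bounded sets, the integrands converge for a.e.\ $x$, are uniformly bounded and supported in a fixed compact set, so dominated convergence yields $G^{(q)}_\sigma\to G_\sigma$. (The discrepancy from the finitely many discontinuity points of the indicators may alternatively be estimated directly via (\ref{tr-in-sgn}) and is of order $q^{-1/2}$.) Combining with $\sum_{j=1}^m\mu_j\bar S^{(q)}(v_j)\to\sum_{j=1}^m\mu_j B(v_j)$ gives the joint a.s.\ convergence of $(G^{(q)},G^{(q)}_\sgn,\sum_{j=1}^m\mu_j\bar S^{(q)}(v_j))$, hence the convergence in distribution asserted in Lemma \ref{LEM1}.

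I expect the one genuinely delicate point to be the correct invocation of the joint invariance principle for the walk together with its local time — crucially in a mode of spatial convergence strong enough to produce both the common compact support and the uniform bound that make the dominated convergence argument run; the rescaling bookkeeping and the treatment of the indicator jumps are routine.
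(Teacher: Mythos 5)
Your approach is genuinely different from the paper's, and I believe it works, but you have a misattribution that should be flagged. You invoke the joint invariance principle for a finite-variance, aperiodic, integer-valued random walk together with the spatial \emph{local time field}, in the uniform topology, and attribute it to \cite{ks}. Kesten and Spitzer do \emph{not} prove or use such a theorem; they carefully avoid it. What \cite[eqs.~(2.4)--(2.8)]{ks} gives is the joint convergence of the walk together with certain \emph{averaged} local times, namely the quantities $\frac{1}{n}\sum_{x=\lceil k\vartheta\sqrt n\rceil}^{\lceil (k+1)\vartheta\sqrt n\rceil}N_{\lfloor t_jn\rfloor}(x)$, to $\int_{k\vartheta}^{(k+1)\vartheta}L_{t_j}(x)\,dx$ --- convergence of coarse averages, not pointwise or uniform convergence of the rescaled local time $x\mapsto n^{-1/2}N_{\lfloor tn\rfloor}(\lfloor x\sqrt n\rfloor)$. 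The uniform local-time convergence you need is a strictly stronger theorem; it is available (Borodin's strong approximation results for local times of integer random walks with finite higher moments, and related work of Cs\"org\H{o}--R\'ev\'esz and Bass--Khoshnevisan), and the paper's moment hypothesis $\gamma>2/\a>2$ comfortably covers it, but it must be cited as such, not as \cite{ks}.

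Beyond that, the comparison with the paper's argument is worth recording. The paper faithfully follows Kesten--Spitzer's original route: it writes $G^{(q)}_\sigma - \cV_\sigma(\vartheta,M,q) = U_\sigma+W^1_\sigma+W^2_\sigma$ and controls each piece ($U_\sigma$ by the range estimate of \cite[Lem.~1]{ks}, $W^1_\sigma$ by a moment bound of order $M\vartheta^{1+\a/2}+\vartheta$, $W^2_\sigma$ by convergence in probability of the ceiling corrections), then passes to the limit at fixed $(M,\vartheta)$ via the averaged-local-time convergence \cite[eqs.~(2.4)--(2.8)]{ks}, and finally lets $M\vartheta\to\infty$, $\vartheta\to 0$. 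This is longer but needs only the weaker averaged statement, which is entirely elementary given the local CLT. Your route --- rewrite $G^{(q)}_\sigma$ as $\int_\R\big|\sum_j(\theta_j\mu_\xi\widehat L^{(q)}_{t_j}(\lfloor x\sqrt q\rfloor)+\tilde\kappa_j\mathbf 1_{I_{s_j\sqrt q}}(\lfloor x\sqrt q\rfloor))\big|^\a_\sigma dx$, Skorokhod-represent the joint local-time field convergence, and then apply dominated convergence using the a.s.\ common compact support --- collapses the entire three-part error analysis into a single continuous-mapping step, at the cost of importing a deeper external theorem. The bookkeeping in your display (the factoring of $q^{-1/2\a}$, the identity $q^{-1/2}\sum_y h(y)=\int_\R h(\lfloor x\sqrt q\rfloor)\,dx$, and the a.e.\ convergence of the indicator terms) is all correct, and $a\mapsto|a|_\sigma^\a$ is continuous, so the dominated convergence does go through once uniform convergence of $\widehat L^{(q)}_{t_j}(\lfloor\cdot\sqrt q\rfloor)$ and compact containment of the supports are secured. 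The one fix you must make is to replace the appeal to \cite{ks} by a precise citation of a joint local-time invariance principle for aperiodic, centered, integer walks with finite $(2+\delta)$-moments.
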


It follows from (\ref{Formulaphi}), Lemma \ref{LEM1} and dominated convergence
that
\be
  \lim_{q \to \infty} \varphi^{(q)} = \E \! \left[ \exp \! \left(-c_1 \left(G - i \,
  G_\sgn \tan \frac{\pi\a}2 \right) + i \sum_{j=1}^m \mu_j \, B(v_j) \right)
  \right] ,
\ee
which is the characteristic function of
\be
  \left( Z(s_1), \ldots, Z(s_m), B(v_1), \ldots, B(v_m),  \Delta(t_1), \ldots,
  \Delta(t_m) \right)
\ee
evaluated in $(\kappa_1, \ldots, \kappa_m, \mu_1, \ldots, \mu_m, \theta_1,
\ldots, \theta_m)$. In fact,

\be
\begin{split}
  \sum_{j=1}^m\left(\kappa_jZ(s_j)+\theta_j\Delta(t_j)\right)
  &=\sum_{j=1}^m\int_0^{+\infty} \! \left(\kappa_j\mathbf 1_{I_j}(x)+\theta_jL_{t_j}(x)
  \right) dZ_+(x) \\
  &\quad +\int_0^{+\infty} \! \left(\kappa_j\mathbf 1_{I_j}(-x)+\theta_jL_{t_j}(-x)
  \right)dZ_-(x)
\end{split}
\ee
and so, conditionally to $B$,
\be
  \E\! \left[\left. e^{i\sum_{j=1}^m\left(\kappa_jZ(s_j)+\theta_j\Delta(t_j)\right)}
  \right| B\right] = \exp \left(-c_1 \left(G-iG_{sgn}\tan\frac{\pi\alpha}2\right)\right) .
\ee
\end{proof}

\begin{proof}[Proof of Lemma \ref{LEM1}]
We follow the proof of \cite[Lem.~6]{ks}. For all $q, \vartheta \in \R^+$,
$M \in \Z^+$ and $\s \in \{ \ \ , \sgn \}$, we define
\be
  \cV_\sigma(\vartheta,M,q) := \vartheta^{1-\a} \sum_{|k|\le M}
  |\cT(k,q)|_{\sigma}^\a \, ,
\ee
where
\be \label{def-tkq}
  \cT(k,q) := \sum_{j=1}^m \left( \sum_{x=\lceil k \vartheta \sqrt{q}
  \rceil+1}^{\lceil (k+1)\vartheta \sqrt{q} \rceil} \! \left( \frac1q \, \theta_j \,
  \mu_\xi \, N_{\lfloor t_j q \rfloor}(x) + \frac1{\sqrt{q}} \, \tilde\kappa_j \,
  \mathbf 1_{I_{s_j\sqrt{q}}} (x) \right) \right) ,
\ee
where $\lceil a \rceil$ is the smallest integer larger than or equal to
$a$.

As in \cite{ks}, we decompose $ G^{(q)}_\sigma - \cV_\sigma(\vartheta,M,q)$
as follows:
\be
  G_\sigma^{(q)} - \cV_\sigma(\vartheta,M,q) = U_\sigma(\vartheta,M,q)
  + W^1_\sigma(\vartheta,M,q) + W^2_\sigma (\vartheta,M,q) \,,
\ee
where
\be \label{marco2}
  U_\sigma(\vartheta,M,q) := \!\! \sum_{x\in A_{\vartheta,M,q}} \left|
  \sum_{j=1}^m \left( q^{-(1+\a)/2\a} \, \theta_j \, \mu_\xi \,
  N_{\lfloor t_j q \rfloor}(x) + q^{-1/2\a} \tilde\kappa_j
  \mathbf 1_{I_{s_j\sqrt{q}}} (x) \right) \right|_\sigma^\a ,
\ee
with $A_{\vartheta,M,q} := \Z \setminus \{\lceil -M\vartheta \sqrt q\rceil +1,
\ldots, \lceil (M+1)\vartheta \sqrt q\rceil \}$ ;
\be \label{marco3}
  W^1_\sigma(\vartheta,M,q) := \sum_{|k|\le M}\sum_{x\in E_{k,q}}
  q^{-(1+\a)/2} \, W^{1,k,q}_\sigma(x),
\ee
with $E_{k,q} := \{\lceil k \vartheta \sqrt q\rceil +1, \ldots, \lceil (k+1)
\vartheta \sqrt q \rceil \}$ and
\be
   W^{1,k,q}_\sigma(x) := \left| \sum_{j=1}^m \theta_j \, \mu_\xi \,
   N_{\lfloor t_j q\rfloor}(x) + \sqrt q \, \tilde\kappa_j
   \mathbf 1_{I_{s_j \sqrt{q}}} (x) \right|_\sigma^\a - q^\a(\#E_{k,q})^{-\a}
   \, |\cT(k,q)|_\sigma^\a \,;
\ee
and finally
\be \label{def-w2-sigma}
  W^2_\sigma (\vartheta,M,q) := \sum_{|k|\le M} \left( q^{(\a-1)/2}
  (\# E_{k,q})^{1-\a} - \vartheta^{1-\a} \right) |\cT(k,q)|_\sigma^\a \,.
\ee

\noindent
\textbf{Convention.}\
With reference to definition (\ref{marco3}), we
convene that for
those values of $\vartheta, k, q$ such that $\lceil k \vartheta \sqrt q\rceil
= \lceil (k+1)\vartheta \sqrt q \rceil$, $E_{k,q} := \emptyset$.
\bigskip

By \cite[Lem.~1, eq.~(2.11)]{ks} applied with $s=\lfloor t_mq\rfloor$ and
$A=M\vartheta/\sqrt{t_m}$, there exists a function $\eta : \R^+ \into [0,1]$,
vanishing at $+\infty$, such that
\be \label{KSL1}
\begin{split}
  &\sup_{q\ge 1/t_m} \, \mathbb P \big( U_\sigma(\vartheta,M,q) \ne 0 \big) \\
  &\qquad \le \sup_{q\ge 1/t_m} \, \mathbb P \! \left(\exists x \,:\, |x|\ge M \vartheta
  \sqrt q \,,\, N_{\lfloor t_m q \rfloor}(x) \ne 0 \right)
 + \mathbf 1_{\{\max\{|s_1|,|s_m|\} \ge M\vartheta\}} \\
  &\qquad = \eta(M\vartheta) \,.
\end{split}
\ee

Let us prove that there exist $K_1, K_2 >0 $ such that, for every
$M \in \Z^+$,
\be \label{KSL2}
  \sup_{q>1} \, \E \! \left[ |W^1_\sigma(\vartheta,M,q)| \right] \le
  K_1 M \vartheta^{1+\a/2} + K_2 \vartheta \,.
\ee
Using (\ref{tr-in-sgn}) and (\ref{tr-in-beta}) (with $\b=\a$), we have
\be
\begin{split}
  & 2^{\a-1} \, q^{-(1+\a)/2} \, \E \! \left[ |W^{1,k,q}_\sigma (x)| \right] \\
  &\qquad \le q^{-(1+\a)/2} \,\E \! \left[ \left| \sum_{j=1}^m \! \left( \theta_j
  \, \mu_\xi \, N_{\lfloor t_j q \rfloor}(x) + \sqrt{q} \, \tilde\kappa_j
  \mathbf 1_{I_{s_j\sqrt{q}}}(x) \right) - q \,\frac{\cT(k,q)} {\# E_{k,q}}
  \right|^\a \right] \\
  &\qquad \le q^{-(1+\a)/2} \,\E \! \left[ \left| \sum_{j=1}^m \theta_j
  \, \mu_\xi \left( N_{\lfloor t_j q \rfloor}(x) - \frac1 {\# E_{k,q}}
  \sum_{y \in E_{k,q}} N_{\lfloor t_j q\rfloor}(y) \right) \right|^\a \right] \\
  &\qquad\quad + q^{-(1+\a)/2} \,\E \! \left[ \left| \sum_{j=1}^m
  \sqrt{q} \, \tilde\kappa_j \left( \mathbf 1_{I_{s_j\sqrt{q}}}(x) - \frac1
  {\# E_{k,q}} \sum_{y \in E_{k,q}} \mathbf 1_{I_{s_j\sqrt{q}}}(y)
  \right) \right|^\a \right]
\end{split}
\ee
Let us sum the r.h.s.\ of the last inequality over $|k|\le M$ and
$x\in E_{k,q}$. By \cite[eq.~(3.10)]{ks}, the sum of the first term
is bounded above by some constant times $M \vartheta^{1+\a/2}$. As
for the second term, we observe that the \fn\ $E_{k,q} \ni x \mapsto
1_{I_{s_j\sqrt{q}}}(x)$ is constant for all values of $|k| \le M$ except
at most one (corresponding to certain cases where the convex hull of
$E_{k,q}$ contains $s_j \sqrt{q}$). Thus, for all non-exceptional values
of $k$ and $x \in E_{k,q}$, the term within parentheses is zero. This
and a cancelation of the exponents of $q$ show that the sum over
$|k|\le M$ and $x\in E_{k,q}$ of the last term above is of the order of
$\vartheta \sum_{j=1}^m |\kappa_j|$. So (\ref{KSL2}) is in force.

Now fix $a,b,c \in \R$ and $\eps > 0$. By (\ref{KSL2}) and the
inequality $|e^{iz} - 1| \le |z|$, for $z \in \R$, we see that one can
find values of $M, \vartheta$ such that $M \vartheta$ is arbitrarily
large but $\vartheta$ is so small that
\be \label{marco4}
  \sup_{q>1} \, \E \! \left[ \exp \!\left( i( a W^1 + b W^1_\sgn)
  (\vartheta, M, q) \right) - 1 \right] < \eps \,.
\ee
Also, let us verify that, for all $M, \vartheta$, the variable $W^2_\sigma
(\vartheta,M,q)$ converges in probability to 0 as $q \to \infty$.
For the contribution to $W^2_\sigma(\vartheta,M,q)$ coming from
the first term in $\cT(k,q)$, cf.\ (\ref{def-w2-sigma}) and (\ref{def-tkq}),
this is proved in \cite{ks} (see the argument after eq.\ (3.7) therein).
The contribution from the second term of $\cT(k,q)$ is bounded by
\be
  \sum_{|k|\le M} \left( \left| q^{(\a-1)/2} \, (\#E_{k,q})^{1-\a} -
  \vartheta^{1-\a} \right| \, \vartheta^\a \sum_{j=1}^m |\kappa_j|^\a
  \right) ,
\ee
but, for all $k$, $( q^{(\a-1)/2} \, (\#E_{k,q})^{1-\a} - \vartheta^{1-\a} ) \
\to 0$, as $q \to \infty$. Hence, for all large enough $q$ (depending
on $M, \vartheta$),
\be \label{marco5}
  \E \! \left[ \exp \!\left( i( a W^2 + b W^2_\sgn)
  (\vartheta, M, q) \right) - 1 \right] < \eps \,.
\ee
Let us introduce the random \fn\
\be \label{KSL3}
  H_\s (\vartheta,M) := \vartheta^{1-\a} \sum_{|k|\le M}
  \left| \int_{k\vartheta}^{(k+1)\vartheta} \sum_{j=1}^m \left( \theta_j \,
  \mu_\xi \, L_{t_j}(x) + \tilde \kappa_j \mathbf 1_{I_{s_j}}(x) \right) dx
  \right|_\sigma^\a .
\ee
It is easy to check that, when $M \to \infty$ and $\vartheta \to 0^+$ in
such a way that $M \vartheta \to +\infty$, $H_\s (\vartheta,M)$
converges almost surely to $G_\s$, for both labels $\s$.

Therefore, by (\ref{KSL1}), (\ref{marco4}), (\ref{marco5}) and the
limit just discussed, one can find values of $M, \vartheta$
such that, for all sufficiently large $q$,
\begin{align}
  & \E \! \left[ \left| \exp \!\left( i( a G^{(q)} + b G^{(q)}_\sgn) \right) -
  \exp \!\left( i( a \cV + b \cV_\sgn) (\vartheta, M, q) \right) \right| \right]
  < \eps \,;  \label{numero1-1} \\
  & \E \! \left[ \left| \exp \!\left( i( a H + b H_\sgn)
  (\vartheta, M) \right) - \exp \!\left( i( a G + b G_\sgn) \right) \right| \right]
  < \eps \,. \label{numero1-2}
\end{align}
Moreover, by \cite[eqs.~(2.4)-(2.8)]{ks}, when $q \to \infty$,
\be
  \left( \left(\frac 1q \sum_{x=\lceil k\vartheta \sqrt{q}\rceil}^{\lceil (k+1)\vartheta
  \sqrt{q}\rceil} N_{\lfloor t_jq\rfloor} \,,\, \bar S^{(q)}(v_j) \right) , \, j=1, \ldots, m
  \right)
\ee
converges in distribution to
\be
  \left( \left( \int_{k\vartheta}^{(k+1)\vartheta} \! L_{t_j(x)} \, dx \,,\, B(v_j) \right) ,
  \, j=1, \ldots, m \right) .
\ee
We also have the following convergence of Riemann sums:
\be
  \lim_{q \to \infty} \frac 1{\sqrt{q}}
  \sum_{x=\lceil k\vartheta \sqrt{q}\rceil}^{\lceil (k+1)\vartheta \sqrt{q}\rceil}
  \mathbf 1_{I_{s_j\sqrt{q}}}(x)=\int_{k\vartheta}^{(k+1)\vartheta}\mathbf 1_{I_{s_j}}(x)
  \, dx \, .
\ee
Therefore the vector $( \cV(\vartheta,M,q), \cV_\sgn
(\vartheta,M,q), \sum_{j=1}^m \mu_j\bar S^{(q)}(v_j) )$ converges in
distribution to $( H(\vartheta,M), H_{\sgn}(\vartheta,M),
\sum_{j=1}^m \mu_j B(v_j) )$. This means that there exists $q_0 =
q_0(M,\vartheta) > 0$ such that, for all $q > q_0$, (\ref{numero1-1})
holds together with the following:
\be \label{numero3}
\begin{split}
  &\E \! \left[ \left| \exp \! \left( i (a\cV+b\cV_\sgn)(\vartheta,M,q) + i c
  \sum_{j=1}^m \mu_j\, \bar S^{(q)}(v_j) \right) \right. \right. \\
  &\quad \left. \left. -\exp \! \left( i(aH+bH_\sgn)
  (\vartheta,M) + i c\sum_{j=1}^m \mu_j \, B(v_j) \right) \right| \right]
  <\eps \,.
\end{split}
\ee
Putting the last three inequalities together we obtain that, for all
$q > q_0$,
\be
\begin{split}
  &\left| \E \! \left[ \exp \! \left( i \left(a G^{(q)} + b G^{(q)}_\sgn + c
  \sum_{j=1}^m \mu_j \, \bar S^{(q)}(v_j) \right) \right) \right] \right. \\
  &\quad \left. - \E \! \left[ \exp \! \left( i \left(aG+b G_\sgn + c
  \sum_{j=1}^m \mu_j \, B(v_j) \right) \right)  \right] \right| <
  3\eps \,.
\end{split}
\ee
In other words, as $q \to \infty$, $( G^{(q)}, G^{(q)}_\sgn,
\sum_{j=1}^m \mu_j \, \bar S^{(q)}(v_j) )$ converges in distribution to
$(G, G_\sgn, \sum_{j=1}^m \mu_j \, B(v_j) )$.
\end{proof}

\subsection{Tightness}
It is a standard result \cite{Sko} that the sequences
$(\bar \o^{(q)}(x),\,x> 0 )_{q\in\R^+}$ and $(-\bar \o^{(q)}(-x),\,
x> 0 )_{q\in\R^+}$, cf.\ (\ref{o-bar}), converge in distribution w.r.t.\
$(\mathbb D(\R_0^+),J_1)$ to the process $Z_+$ (or $Z_-$)
defined in Section \ref{sec-setup}. In particular they are tight. The same is
true for $(\bar S^{(q)}(t),\,t\ge 0)_{q\in\R^+}$, which, due to the functional
central limit theorem, converges to $B$.

Hence it remains to prove the tightness of $(\bar T^{(q)}(t),\, t\ge
0)_{q\in\R^+}$ on $(\mathbb{D}(\mathbb R_0^+), J_1)$. Let us fix $T>0$ and
prove the tightness of $(\bar T^{(q)}(t),\, t\in[0,T])_{q\in\R^+}$ in
$(\mathbb{D}[0,T],J_1)$ following the proof of \cite[Lem.~7]{ks}.

For a given $\rho>0$,  we set $\z_{y,q} = \z_{y,q,\rho} := \z_y \,
\mathbf 1_{\{\z_y \le \rho q^{1/2\a} \}}$ and define the process
\be
  \bar T_0^{(q)}(t) = \bar T_0^{(q,\rho)}(t) := q^{-(1+\a)/2\a} \sum_{y\in\Z}
  \mathcal N_{\lfloor tq \rfloor}(y) \, \z_{y,q}\,,
\ee
that will approximate the process $  \bar T^{(q)}$.
To control the approximation error first notice that, for any $c>0$,
\be \label{approx}
\begin{split}
  &\P\! \left(\exists t\in[0,T] \,:\, \bar T^{(q)}(t) \ne \bar T_0^{(q)}(t) \right) \\
  &\qquad = \P\! \left(\exists t\in[0,T] \,:\,\sum_{y\in\Z} \mathcal N_{\lfloor tq \rfloor}(y)
  (\z_y-\z_{y,q}) \ne 0 \right) \\
  &\qquad \le \P\! \left(\exists y :  |y|\le c \sqrt{q},\, \z_y\ne \z_{y,q} \right) +
  \P\! \left(\exists t\in[0,T]\,,\exists y : |y| >c \sqrt{q},\,
  \mathcal N_{\lfloor tq \rfloor}(y)>0 \right) .
\end{split}
\ee
Now, by virtue of standard results on the maximum of a random
walk, we fix $c>0$ such that
\be
  \P\! \left(\exists t\in[0,T]\,,\exists y : |y| >c \sqrt{q},\, \mathcal N_{\lfloor tq \rfloor}(y)
  >0 \right)  = \P\! \left(\max_{k\le \lfloor Tq\rfloor}|S_k|>c\sqrt{q}\right) <
  \frac\e 4 \, .
\ee
We then choose $\rho$ such that, using (\ref{c0}) as well,
\be
  \P\! \left(\exists y :  |y|\le c \sqrt{q},\, \z_y\ne \z_{y,q} \right) \le
  3c\sqrt{q}\,\P(\z_1\ne \z_{1,q})\leq 3c c_0 \rho^{-\a} < \frac\e 4 \, .
\ee
Inserting these estimates in (\ref{approx}) we obtain
\be
  \mathbb P \! \left(\exists t\in[0,T] \,:\, \bar T^{(q)}(t) \ne \bar T_0^{(q)}(t)
  \right) < \frac \e 2 \,,
\ee
showing that it is enough to prove the tightness of $(\bar T^{(q)}_0 (t),\,
t \in[0,T])_{q\in\R^+}$ in $(\mathbb{D}[0,T],J_1)$.


We observe that, due to \cite[Thm.~13.5, eq.~(13.14)]{Bill},
it is enough to prove the existence of $K_1>0$ such that, for every $s,t$
with $0<s<t<T$,
\be \label{tightnesscondition}
  \E \! \left[ \left| \bar T^{(q)}_0(t) - \bar T^{(q)}_0(s) \right| \right]
  \le K_1 \left( q^{-1} (\lfloor tq \rfloor-\lfloor sq \rfloor) \right)^{3/4} .
\end{equation}
Indeed, together with H\"{o}lder's inequality, this would imply that for every
$0<s<r<t<T$,
\be \label{marco7}
  \E \! \left[ \left| \bar T^{(q)}_0(r) - \bar T^{(q)}_0(s) \right|^{1/2} \, \left|
  \bar T^{(q)}_0(t) - \bar T^{(q)}_0(r) \right|^{1/2} \right] \le K_1( 2( t-s ))^{3/4} \,.
\ee
The above uses that, when $t-s\ge 1/q$, $\lfloor tq\rfloor-\lfloor sq\rfloor
\le 2q(t-s)$ and, when $t-s< 1/q$, either $\lfloor rq\rfloor=\lfloor sq\rfloor$
or $\lfloor rq\rfloor=\lfloor tq\rfloor$ making the l.h.s.\ of (\ref{marco7})
null.

To verify (\ref{tightnesscondition}), we decompose $\bar T_0^{(q)}$ in
$\bar T_1^{(q)}+\bar T_2^{(q)}$, where
\begin{align}
  \bar T_1^{(q)}(t) &:= q^{-(1+\a)/2\a} \sum_{y\in\Z}
  \mathcal N_{\lfloor tq \rfloor}(y) \, \E [\z_{1,q}] \,; \\
  \bar T_2^{(q)}(t) &:=  q^{-(1+\a)/2\a} \sum_{y\in\Z}
  \mathcal N_{\lfloor tq \rfloor}(y) \, \bar\z_{y,q} \,,
\end{align}
with $\bar \z_{y,q}:=\z_{y,q} - \E [\z_{1,q}]$. Using (\ref{c0}), we see that, as
$q \to \infty$,
\be \label{expectationtruncated}
\begin{split}
  \E \! \left[ \z_{1,q} \right] &= \int_0^{+\infty} \!\! \P(\z_{1,q} > u)
  \, du = \int_0^{\rho q^{1/2\a}} \!\!\! \left( \P(\z_1 > u) - \P(\z_1 >
  \rho q^{1/2\a}) \right) du \\
  &\sim c_0 \, \frac \a {1-\a} \, \rho^{1-\a} \, q^{(1-\a)/2\a} =: K_2 \,
  q^{(1-\a)/2\a} \,,
\end{split}
\ee
for a certain positive constant $K_2$. Analogously there exists
$K_3>0$ such that
\be \label{variancetruncated}
  \E \! \left[ \z_{1,q}^2 \right] = \int_0^{+\infty} \!\!
  \P(\z_{1,q}^2 > u) \, du \le \int_0^{\rho^2 q^{1/\a}} \!\!
  \P(\z_1 > \sqrt{u}) \, du \, \sim K_3 \, q^{(2-\a)/2\a}  \, .
\ee

Now, on one hand, since $S$ and $\zeta$ are independent, we obtain
\be \label{T1}
\begin{split}
  \E\! \left[ \left|\bar T_1^{(q)}(t)-\bar T_1^{(q)}(s) \right| \right] &\le q^{-(1+\a)/2\a}
  \sum_{y\in\Z} \E\! \left[\left|\mathcal N_{\lfloor tq\rfloor}(y) -
  \mathcal N_{\lfloor sq\rfloor}(y) \right|\right] \E\!\left[\zeta_{1,q}\right] \\
  &\le q^{-(1+\a)/2\a} \!\! \sum_{j=\lfloor sq\rfloor+1}^{\lfloor tq\rfloor} \E\!\left[\left|
  S_j-S_{j-1}\right|\right] \E\!\left[\zeta_{1,q}\right] \\
  &\le 2K_2 \, q^{-1} (\lfloor tq\rfloor-\lfloor sq\rfloor) \, \E[|S_1|]\, ,
\end{split}
\ee
for every $q$ large enough, by (\ref{expectationtruncated}). On the other hand,
using in addition that the variables $\{ \bar\z_{y,q} \}_{y \in \Z}$ are mutually
orthogonal, we write
\be \label{marco6}
\begin{split}
  \E \! \left[ \left( \bar T^{(q)}_2(t) - \bar T^{(q)}_2(s) \right)^2 \right]
  &\le q^{-(1+\a)/\a} \sum_{y\in\Z} \E \! \left[ \left(
  \mathcal N_{\lfloor tq \rfloor}(y) - \mathcal N_{\lfloor sq \rfloor}(y)
  \right)^2 \right] \E \! \left[ (\bar\z_{1,q})^2 \right] \\
  &\le q^{-(1+\a)/\a} \sum_{y\in\Z} \E \! \left[ \left(
  \mathcal N_{\lfloor tq \rfloor - \lfloor sq \rfloor} (y)
  \right)^2 \right] \E \! \left[ (\z_{1,q})^2 \right] \,.
\end{split}
\ee
At this point observe that (\ref{Burkholder})-(\ref{burkho1}) and
(\ref{burkho4}) are still valid if $\alpha=2$. Hence, using
the inequality $2ab \le a^2+b^2$ and (\ref{sumNnbeta}) with
$\beta=2$, we have, for $n \to \infty$,
\be
\begin{split}
  &\sum_{y\in\Z} \E \!\left[ (\mathcal N_n(y))^2 \right] \\
  &\qquad \le 2 \sum_{y\in\Z} \E \! \left[ \left( \sum_{r\in\Z}
  \P(|S_1|>r) \, N_n(y+r) \right)^2\right] + O(n) \\
  &\qquad \le \sum_{y\in\Z} \sum_{r,s\in\Z} \P(|S_1|>r) \,
  \P(|S_1|>s) \, \E \! \left[ \left( N_n(y+r))^2 + (N_n(y+s) \right)^2
  \right] + O(n) \\
  &\qquad \le 2 \mu_\xi^2 \, \sum_{y\in\Z} \E \!\left[ (N_n(y))^2
  \right] + O(n) \\
  &\qquad = O \!\left( n^{3/2} \right) .
\end{split}
\ee
Applying the above and (\ref{variancetruncated}) in
(\ref{marco6}) yields
\be
  \E \! \left[ \left( \bar T^{(q)}_2(t) - \bar T^{(q)}_2(s) \right)^2 \right] \le
  K_4 \, q^{-3/2} \, (\lfloor tq\rfloor-\lfloor sq\rfloor)^{3/2}\, ,
\ee
for some $K_4>0$. Therefore, by Jensen's inequality,
\be
  \E \! \left[ \left| \bar T^{(q)}_2(t) - \bar T^{(q)}_2(s) \right| \right] \le
  \sqrt{K_4} \, q^{-3/4} \, (\lfloor tq\rfloor-\lfloor sq\rfloor)^{3/4}\, ,
\ee
which, combined with \eqref{T1}, gives (\ref{tightnesscondition}), concluding the
proof of the tightness of $(\bar T^{(q)})_{q\in\R^+}$ and therefore of Lemma
\ref{main-lemma}.

\footnotesize

\end{document}